\documentclass[a4paper,10pt,reqno]{amsart}
\pagestyle{headings}

\usepackage{mathtools}
\usepackage{multirow}
\usepackage{enumerate}
\usepackage{enumitem}
\usepackage{amsmath,amssymb,amsthm,mathrsfs,amsfonts,float}
\usepackage[all]{xy}
\usepackage{tikz}
\usepackage{graphicx}
\usepackage{caption}
\usepackage{hyperref}
\usepackage{bbm}
\usepackage{todonotes}
\usepackage{afterpage}
\usepackage{xcolor}
\hypersetup{
	colorlinks,
	linkcolor={black},
	citecolor={blue},
	urlcolor={black}
}
\usepackage{longtable}

\numberwithin{equation}{section}	  
\theoremstyle{definition}
\newtheorem{thm}{Theorem}[section]
\newtheorem{prop}[thm]{Proposition}
\newtheorem*{prop-nonum}{Proposition}
\newtheorem{lem}[thm]{Lemma}
\newtheorem*{lem-nonum}{Lemma}

\newtheorem*{defn-nonum}{Definition}

\newtheorem{rem}[thm]{Remark}
\newtheorem*{rem-nonum}{Remark}

\newtheorem*{conj-nonum}{Conjecture}

\newtheorem*{fact-nonum}{Fact}

\newenvironment{myitemize}
{	\begin{itemize}
		\setlength{\itemsep}{0pt}
		\setlength{\parskip}{0pt}
		\setlength{\parsep}{0pt}	}
	{	\end{itemize}	}


\makeatletter
\providecommand*{\twoheadrightarrowfill@}{%
	\arrowfill@\relbar\relbar\twoheadrightarrow
}
\providecommand*{\twoheadleftarrowfill@}{%
	\arrowfill@\twoheadleftarrow\relbar\relbar
}
\providecommand*{\xtwoheadrightarrow}[2][]{%
	\ext@arrow 0579\twoheadrightarrowfill@{#1}{#2}%
}
\providecommand*{\xtwoheadleftarrow}[2][]{%
	\ext@arrow 5097\twoheadleftarrowfill@{#1}{#2}%
}
\makeatother

\makeatletter
\newbox\xrat@below
\newbox\xrat@above
\newcommand{\xrightarrowtail}[2][]{%
	\setbox\xrat@below=\hbox{\ensuremath{\scriptstyle #1}}%
	\setbox\xrat@above=\hbox{\ensuremath{\scriptstyle #2}}%
	\pgfmathsetlengthmacro{\xrat@len}{max(\wd\xrat@below,\wd\xrat@above)+.6em}%
	\mathrel{\tikz [>->,baseline=-.75ex]
		\draw (0,0) -- node[below=-2pt] {\box\xrat@below}
		node[above=-2pt] {\box\xrat@above}
		(\xrat@len,0) ;}}
\makeatother

\makeatletter
\newcommand*{\relrelbarsep}{.386ex}
\newcommand*{\relrelbar}{%
	\mathrel{%
		\mathpalette\@relrelbar\relrelbarsep
	}%
}
\newcommand*{\@relrelbar}[2]{%
	\raise#2\hbox to 0pt{$\m@th#1\relbar$\hss}%
	\lower#2\hbox{$\m@th#1\relbar$}%
}
\providecommand*{\rightrightarrowsfill@}{%
	\arrowfill@\relrelbar\relrelbar\rightrightarrows
}
\providecommand*{\leftleftarrowsfill@}{%
	\arrowfill@\leftleftarrows\relrelbar\relrelbar
}
\providecommand*{\leftrightarrowsfill@}{%
	\arrowfill@\leftrightarrows\relrelbarlr\relrelbarlr
}
\providecommand*{\xrightrightarrows}[2][]{%
	\ext@arrow 0359\rightrightarrowsfill@{#1}{#2}%
}
\providecommand*{\xleftleftarrows}[2][]{%
	\ext@arrow 3095\leftleftarrowsfill@{#1}{#2}%
}
\makeatother

\newcommand{\GL}{\mathrm{GL}}

\newcommand{\pp}{\partial}

\newcommand{\Z}{\mathbb{Z}}

\newcommand{\Q}{\mathbb{Q}}

\newcommand{\im}{\mathrm{im}}
\newcommand{\ra}{\rightarrow}

\makeatletter
\MHInternalSyntaxOn
\def\MT_leftarrow_fill:{%
	\arrowfill@\leftarrow\relbar\relbar}
\def\MT_rightarrow_fill:{%
	\arrowfill@\relbar\relbar\rightarrow}
\newcommand{\xrightleftarrows}[2][]{\mathrel{%
		\raise.55ex\hbox{%
			$\ext@arrow 0359\MT_rightarrow_fill:{\phantom{#1}}{#2}$}%
		\setbox0=\hbox{%
			$\ext@arrow 3095\MT_leftarrow_fill:{#1}{\phantom{#2}}$}%
		\kern-\wd0 \lower.55ex\box0}}
\MHInternalSyntaxOff
\makeatother

\makeatletter
\newlength\min@xx
\newcommand*\xxra[1]{\begingroup
  \settowidth\min@xx{$\m@th\scriptstyle#1$}
  \@xxra}
\newcommand*\@xxra[2][]{
  \sbox8{$\m@th\scriptstyle#1$}  
  \ifdim\wd8>\min@xx \min@xx=\wd8 \fi
  \sbox8{$\m@th\scriptstyle#2$} 
  \ifdim\wd8>\min@xx \min@xx=\wd8 \fi
  \xrightarrow[{\mathmakebox[\min@xx]{\scriptstyle#1}}]
    {\mathmakebox[\min@xx]{\scriptstyle#2}}
  \endgroup}
\makeatother

\makeatletter
\newcommand*\xxla[1]{\begingroup
  \settowidth\min@xx{$\m@th\scriptstyle#1$}
  \@xxla}
\newcommand*\@xxla[2][]{
  \sbox8{$\m@th\scriptstyle#1$}  
  \ifdim\wd8>\min@xx \min@xx=\wd8 \fi
  \sbox8{$\m@th\scriptstyle#2$} 
  \ifdim\wd8>\min@xx \min@xx=\wd8 \fi
  \xleftarrow[{\mathmakebox[\min@xx]{\scriptstyle#1}}]
    {\mathmakebox[\min@xx]{\scriptstyle#2}}
  \endgroup}
\makeatother

\usepackage[a4paper, total={6.2in, 9in}, left=1.025in, right=1.025in, bottom=1.025in]{geometry}

\usepackage[T1]{fontenc}
\usepackage[utf8]{inputenc}
\usepackage{palatino}
\usepackage{textcomp}

\graphicspath{{figures/}}
\usepackage{float}
\makeatletter
\setlength{\@fptop}{0pt plus 1fil}
\setlength{\@fpbot}{0pt plus 1fil}
\makeatother

\usepackage{cleveref}
\usepackage[style=numeric,backend=bibtex,maxbibnames=99,firstinits=true,sorting=nty]{biblatex}
\DeclareFieldFormat{postnote}{#1}
\addbibresource{../bibliography.bib}

\allowdisplaybreaks

\title{On slice alternating 3-braid closures}
\author{Vitalijs Brejevs}
\address{School of Mathematics and Statistics, University of Glasgow, Glasgow, United Kingdom}
\email{Vitalijs.Brejevs@glasgow.ac.uk}

\begin{document}
	\begin{abstract}
		We construct ribbon surfaces of Euler characteristic one for several infinite families of alternating 3-braid closures. We also use a twisted Alexander polynomial obstruction to conclude the classification of smoothly slice knots which are closures of alternating 3-braids with up to 20 crossings.
	\end{abstract}
	\maketitle
	
	\section{Introduction}

	By an \emph{alternating braid} we mean a braid such that along any strand, over- and undercrossings alternate. Let $ \sigma_1 $ and $ \sigma_2 $ be the standard generators of the braid group on three strands $ B_3 $. If the closure of an alternating 3-braid has non-zero determinant, then it is isotopic to the closure of a braid
		\[
		\tag{$\star$}
		\sigma_1^{a_1} \sigma_2^{-b_1} \sigma_1^{a_2} \sigma_2^{-b_2} \dots \sigma_1^{a_n} \sigma_2^{-b_n}
		\]
	with $ n \geqslant 1 $ for some $ a_i $, $ b_i \geqslant 1 $ for all $ i $. Every 3-braid of the form $ (\star) $ can be equivalently described by its \emph{associated string}
		$
			\mathbf{a} = (2^{[a_1 - 1]}, b_1 + 2, \dots, 2^{[a_n - 1]}, b_n + 2),
		$
	where $ 2^{[a_i - 1]} $ represents the substring consisting of the number~$ 2 $ repeated $ a_i - 1 $ times. Cyclic rotations and reversals of $ \mathbf{a} $ do not change the isotopy class of respective braid closures in $ S^3 $, so we consider associated strings up to those two operations. The \emph{linear dual} of a string $ \mathbf{b} = (b_1, \dots, b_k) $ with all $ b_i \geqslant 2 $ is defined as follows: if $ b_j \geqslant 3 $ for some $ j $, write $ \mathbf{b} $ in the form
		$
			\mathbf{b} = (2^{[m_1]}, 3 + n_1, 2^{[m_2]}, 3 + n_2, \dots, 2^{[m_j]}, 2 + n_j)
		$
	with $ m_i, n_i \geqslant 0 $ for all $ i $. Then its linear dual is
		$
			\mathbf{c} = (2 + m_1, 2^{[n_1]}, 3 + m_2, 2^{[n_2]}, 3 + m_3, \dots, 3 + m_j, 2^{[n_j]})
		$.
	If $ \mathbf{b} $ is $ ( 2^{[k]} ) $ or $ (1) $, define its linear dual as $ (k+1) $ or the empty string, respectively.

	Given a link $ L \subset S^3 $, by a \emph{ribbon surface} we mean a surface $ F $ bounded by~$ L $ that is properly smoothly embedded in $ D^4 $, has no closed components, and may be isotoped rel boundary so that the radial distance function $ D^4 \ra [0, 1] $ induces a handle decomposition on $ F $ with only 0- and 1-handles. By a \emph{slice surface} we mean a surface $ S $ bounded by $ L $ that is properly smoothly embedded in $ D^4 $ and has no closed components; neither $ F $ nor $ S $ are required to be connected or orientable. Following~\cite{DonaldOwens2012}, we say that $ L $ which bounds a ribbon (or slice) surface of Euler characteristic one is \emph{$\chi$-ribbon} (or \emph{$\chi$-slice}); these definitions coincide with the usual definitions of ribbon and slice in the case of knots. Clearly, if $ L $ is $ \chi $-ribbon, then it is also $ \chi $-slice.

	In~\cite{simone2020classification}, Simone has classified associated strings of all alternating 3-braid closures $ L $ with non-zero determinant such that $ \Sigma_2(S^3, L) $, the double branched cover of $ S^3 $ over $ L $, is unobstructed by Donaldson's theorem from bounding a rational ball, into five families:
		\begin{itemize}
			\item $ \mathcal{S}_{2a}=\{(b_1+3,b_2,\ldots,b_k,2,c_l,\ldots,c_1)\}$;
			\item $ \mathcal{S}_{2b}=\{(3+x,b_1,\ldots,b_{k-1},b_k+1,2^{[x]},c_l+1,c_{l-1},\ldots,c_1)\,|\, x\geqslant 0\text{ and } k+l\geqslant2\}$;
			\item $\begin{aligned}[t] \mathcal{S}_{2c}=\{&(3+x_1,2^{[x_2]},3+x_3,2^{[x_4]},\ldots,3+x_{2k+1},2^{[x_1]}, 3+x_2,2^{[x_3]},\ldots,3+x_{2k},2^{[x_{2k+1}]})\,|\\ &\, k\geqslant0 \text{ and } x_i\geqslant 0 \text{ for all } i\};\end{aligned}$
			\item $ \mathcal{S}_{2d}=\{(2,2+x,2,3,2^{[x-1]},3,4)\,|\, x \geqslant 1\} \cup \{ (2, 2, 2, 4, 4) \}$;
			\item $ \mathcal{S}_{2e}=\{(2,b_1+1,b_2,\ldots,b_k,2,c_l,\ldots,c_2,c_1+1,2)\,|\, k+l \geqslant 3 \} \cup \{ (2,2,2,3) \}$.
		\end{itemize}
	Here strings $ (b_1, \dots, b_k) $ and $ (c_1, \dots, c_l) $ are linear duals of each other. Since $ \Sigma_2(S^3, L) $ of a $\chi$-slice link~$ L $ bounds a rational ball~\cite[Proposition 2.6]{DonaldOwens2012}, every $ \chi $-slice alternating 3-braid closure with non-zero determinant has its associated string in one of these families. Moreover, Simone has explicitly constructed rational balls for all such alternating 3-braid closures.

	We show that alternating 3-braid closures whose associated strings lie in $ \mathcal{S}_{2a} \cup \mathcal{S}_{2b} \cup \mathcal{S}_{2d} \cup \mathcal{S}_{2e} $ are $ \chi $-ribbon by exhibiting band moves, defined in Section~2, which make their link diagrams isotopic to the two- or three-component unlink. In Section~3, we consider the set $ \mathcal{S}_{2c} \setminus (\mathcal{S}_{2a} \cup \mathcal{S}_{2b} \cup \mathcal{S}_{2d} \cup \mathcal{S}_{2e}) $ that includes strings associated to known non-$ \chi $-slice alternating 3-braid closures, such as certain Turk's head knots, and list more examples of potentially non-$ \chi $-slice knots and links. In Section~4 we follow~\cite{heraldkirklivingston} and~\cite{ammmps2020branched} in applying a twisted Alexander polynomial obstruction to show that among these examples, three knots are indeed not slice; this concludes the classification of smoothly slice knots which are closures of alternating 3-braids with up to 20 crossings. \\

	\noindent\textbf{Acknowledgements.} We thank our supervisors Brendan Owens and Andy Wand for many helpful discussions and advice, Frank Swenton for his \textsf{KLO} software which we have found very useful, and Ross Paterson for consultation on some algebra. We are grateful to Paolo Aceto, Marco Golla, Kyle Larson, Ana Lecuona, Paolo Lisca, Allison Miller, Maggie Miller and Jonathan Simone for their comments on an earlier draft of this paper. This work was funded by the Carnegie Trust.

	\section{Ribbon surfaces for $\mathcal{S}_{2a} \cup \mathcal{S}_{2b} \cup \mathcal{S}_{2d} \cup \mathcal{S}_{2e}$}
	\label{sec:ribbons}

	One may exhibit a ribbon surface for a link $ L $ as follows. By a \emph{band move} on $ L $ we mean choosing an embedding $ \varphi: D^1 \times D^1 \hookrightarrow S^3 $ of a \emph{band} so that the image of $ \varphi $ is disjoint from $ L $ except for $ \varphi(\pp D^1 \times D^1) $ coincident with two segments of $ L $, removing those segments, joining corresponding ends along $ \varphi(D^1 \times \pp D^1) $ and smoothing the corners. This operation amounts to removing a 1-handle in the putative ribbon surface $ F $. If after $ n $ band moves, the resulting link is isotopic to the $ (n+1) $-component unlink, one has indeed obtained a ribbon surface $ F $ of Euler characteristic one bounded by $ L $, since each component of the unlink bounds a 0-handle of $ F $. Each band may be represented on a link diagram by an arc with endpoints on $ L $ that crosses the strands of $ L $ transversally, has no self-crossings, and is annotated by the number of half-twists in the band relative to the blackboard framing.

	Given a 3-braid $ \beta = \sigma_1^{a_1} \sigma_2^{-b_1} \dots \sigma_1^{a_n} \sigma_2^{-b_n} $, we draw it from left to right, as shown in Figure~\ref{fig:closure}, and orient all strings in the closure $ \widehat{\beta} $ clockwise. Choose the chessboard colouring of the diagram for $ \widehat{\beta} $ where the unbounded region is white. Then there are $ m = \left(\sum_{i=1}^n a_i \right) + 1 $ black regions. We can index the black regions, excluding the one not adjacent to the unbounded region (marked by $ * $ in Figure~\ref{fig:closure}), by $ \{ 1, \dots, m - 1 \} $ such that the number of crossings along the boundary of the region indexed by $ i $ is given by the $ i^{\textrm{th}} $ entry of the associated string $ \mathbf{a} = (2^{[a_1 - 1]}, b_1 + 2, \dots, 2^{[a_n - 1]}, b_n + 2) $, and the region indexed by $ i $ shares one crossing with each of the regions indexed by $ i - 1 $ and $ i + 1 $ (mod $ m - 1 $).

	\begin{figure}[h]
		\centering
		\def\svgwidth{0.6\textwidth}
		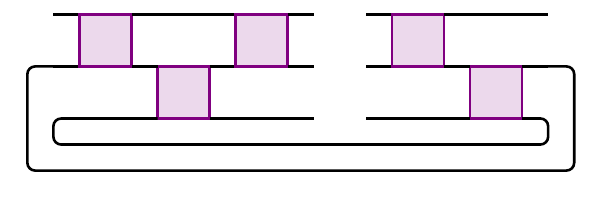
		\caption{
			A generic alternating 3-braid closure. We denote sequences of positive (negative) crossings by blocks annotated by positive (negative) coefficients.
		} \label{fig:closure}
	\end{figure}

	\begin{prop}
	\label{prop:ribbons}
		Let $ \mathbf{a} $ be the associated string of an alternating 3-braid closure $ \widehat{\beta} $. If $ \mathbf{a} \in \mathcal{S}_{2a} \cup \mathcal{S}_{2d} \cup \mathcal{S}_{2e} $, then $ \widehat{\beta} $ bounds a ribbon surface with a single 1-handle. If $ \mathbf{a} \in \mathcal{S}_{2b} $, then $ \widehat{\beta} $ bounds a ribbon surface with at most two 1-handles.
	\end{prop}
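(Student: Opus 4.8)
The plan is entirely constructive. For each admissible family I will produce an explicit diagram of $\widehat\beta$, obtained from the standard alternating diagram of Figure~\ref{fig:closure}, mark on it a single band (two bands in the $\mathcal{S}_{2b}$ case), and verify through a sequence of Reidemeister moves that performing the band move(s) turns the diagram into the two-component (respectively three-component) unlink. By the discussion at the beginning of this section, such a sequence of band moves exhibits a ribbon surface of Euler characteristic one, with one (respectively two) 1-handle(s), bounded by $\widehat\beta$.

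First I would translate the given associated string $\mathbf a$ back to a braid word of the form $(\star)$ and draw $\widehat\beta$, indexing the black regions by $1,\dots,m-1$ so that the entries of $\mathbf a$ record the crossing counts along their boundaries. The structural fact to exploit is that, in $\mathcal{S}_{2a}$, $\mathcal{S}_{2d}$ and $\mathcal{S}_{2e}$, the string splits --- once the junction $2$'s, $+1$'s and $+3$'s are absorbed into the closure arcs --- into a ``$\mathbf b$-part'' and a ``$\mathbf c$-part'' that are linear duals of one another, and that the linear-dual relation is generated by a short recursion on string length. In each of these families I would place one band joining the two strands adjacent to the black region lying between the $\mathbf b$- and $\mathbf c$-parts, with half-twist count chosen so that a Reidemeister~II move applies immediately after the band move.

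The bulk of the argument is then an induction. For $\mathcal{S}_{2a}$ I would induct on the length $k+l$: after the band move one checks that the last entry of $\mathbf b$, together with the entry of its linear dual $\mathbf c$ that it pairs with, can be removed by Reidemeister~I and~II moves, leaving the band move on a diagram whose string is again $\mathcal{S}_{2a}$-type for the truncated linear-dual pair, with the band's framing updated accordingly; the short base cases are checked by hand and yield the two-component unlink. The same induction on $k+l$ handles the main (dual-pair) subfamily of $\mathcal{S}_{2e}$; the infinite subfamily $\{(2,2+x,2,3,2^{[x-1]},3,4)\}$ of $\mathcal{S}_{2d}$ is handled by an analogous induction on $x$, and the sporadic strings $(2,2,2,4,4)$ and $(2,2,2,3)$ are dispatched by explicit pictures. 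For $\mathcal{S}_{2b}$ the $+1$-perturbations of the otherwise dual strings $\mathbf b$, $\mathbf c$, together with the block $2^{[x]}$, prevent a single band from unzipping the diagram; instead I would mark one band which, after Reidemeister moves, deletes the block $2^{[x]}$ and undoes the $+1$'s, converting $\widehat\beta$ into an alternating 3-braid closure whose string is either already an unlink diagram or lies in $\mathcal{S}_{2a}$ --- in the latter case the $\mathcal{S}_{2a}$ band completes the argument, for a total of at most two 1-handles.

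The step I expect to be the main obstacle is making the inductive step uniform. The recursion defining the linear dual branches according to whether the leading entry of a string is $2$ or is at least $3$, so ``cancel one dual pair'' must be realized by an explicit local isotopy in each branch, and in each branch one has to track correctly how the band's half-twist count changes; finding a single band placement and a single recursive local move that work across all of $\mathcal{S}_{2a}$, and then propagate to $\mathcal{S}_{2b}$, $\mathcal{S}_{2d}$ and $\mathcal{S}_{2e}$, is the delicate part. A secondary point requiring care is that the terminal diagram must genuinely be recognized as an unlink: matching the determinant, or even the Alexander polynomial, would not suffice, so the isotopy to the unlink has to be carried out, at least schematically, in every base case.
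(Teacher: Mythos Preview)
Your plan is essentially the paper's: for each family one places an explicit band on the standard alternating diagram, and then verifies by isotopy that the result is an unlink. The paper's key device, which you rediscover in inductive guise, is the braid identity
\[
B\,(\sigma_2\sigma_1\sigma_2)^{-1}\,C \;=\; (\sigma_2\sigma_1\sigma_2)^{-1}
\]
for the sub-braids $B$ and $C$ corresponding to linearly dual substrings; once the band move has produced a half-twist between them, one ``cancels duals'' by repeatedly undoing flyped tongues. Your induction on $k+l$ is exactly this cancellation, peeled off one tongue at a time, so for $\mathcal{S}_{2a}$ and $\mathcal{S}_{2e}$ the two approaches coincide. Likewise for $\mathcal{S}_{2d}$: the paper's isotopy contains a step that undoes $x-1$ crossings in two opposing blocks by a flype-plus-Reidemeister-II, which is precisely your induction on $x$; note, however, that $\mathcal{S}_{2d}$ does not have the dual-pair structure, so the cancellation there is a separate ad~hoc sequence, not an instance of the tongue lemma.

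The one place your route genuinely diverges is $\mathcal{S}_{2b}$. The paper's first band does \emph{not} reduce to an $\mathcal{S}_{2a}$-type string; instead it creates the half-twist, one cancels the dual sub-braids, and the residual link is $T(2,x+2)\,\#\,T(2,-(x+2))$ together with a split unknot --- the second band is then the standard ribbon band for that connected sum. Your proposed first band, which ``deletes the block $2^{[x]}$ and undoes the $+1$'s'' to land back in $\mathcal{S}_{2a}$, is plausible but speculative: band moves on alternating 3-braid closures do not in general return alternating 3-braid closures, and you have not exhibited such a band. This is the step most likely to fail as written; the paper's route through the torus-link connected sum is the reliable one.
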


	Our main observation, previously used by Lisca~\cite{lisca-rational} and Lecuona~\cite{lecuona-montesinos}, is that if $ \mathbf{a} $ contains two disjoint linearly dual substrings (possibly perturbed on the ends), then the link diagram of $ \widehat{\beta} $ contains sub-braids which, if connected to each other by a half-twist $ (\sigma_2 \sigma_1 \sigma_2)^{-1} $, may be cancelled out via successive isotopies. More precisely, suppose that $ (b_1, \dots, b_k) $ and $ (c_1, \dots, c_l) $ are linear duals. Let $ \mathbf{b'} = (b_1 + x_l, b_2, \dots, b_k + x_r) $ and $ \mathbf{c'} = (c_l + y_l, c_{l-1}, \dots, c_1 + y_r) $ with $ x_i, y_i \geqslant 0 $ for $ i \in \{ l, r \} $ and suppose that $ \mathbf{a} = \mathbf{b'} \vert \mathbf{t} \vert \mathbf{c'} \vert \mathbf{s} $, where $ \mathbf{t} $ and $ \mathbf{s} $ are arbitrary strings, the length of $ \mathbf{t} $ is $ t \geqslant 0 $, and $ \mid $ denotes string concatenation. Consider the sub-braid~$ B $ in the link diagram of $ \widehat{\beta} $ that exactly contains all crossings along the boundary of black regions $ 2, \dots, k-1 $, all but $ x_l + 1 $ leftmost crossings along the boundary of region $ 1 $, and all but $ x_r + 1 $ rightmost crossings along the boundary of region $ k $. Consider also the sub-braid $ C $ that exactly contains all crossings along the boundary of regions $ k + t + 2, \dots, k + t + l - 1 $, all but $ y_l + 1 $ leftmost crossings along the boundary of region $ k + t + 1 $, and all but $ y_r + 1 $ rightmost crossings along the boundary of region $ k + t + l $. Then $ B (\sigma_2 \sigma_1 \sigma_2)^{-1} C = (\sigma_2 \sigma_1 \sigma_2)^{-1}$. Hence, if after applying a band move to $ \widehat{\beta} $ away from $ B $ and $ C $, they are connected by a half-twist of the three strands, one may remove all crossings in $ B $ and $ C $ via isotopies illustrated in Figure~\ref{fig:untongue}. We call $ B $ and $ C $ \emph{dual sub-braids} and enclose them in all following figures in blue and chartreuse rectangles, respectively.

	\begin{proof}[Proof of Proposition~\ref{prop:ribbons}]
		See~\Cref{fig:S2a,fig:S2b,fig:S2d,fig:S2e}.
	\end{proof}

	\begin{center}
		\begin{figure}[h]
			\centering
			\vspace{2em}
			\def\svgwidth{0.45\textwidth}
\begingroup%
  \makeatletter%
  \providecommand\color[2][]{%
    \errmessage{(Inkscape) Color is used for the text in Inkscape, but the package 'color.sty' is not loaded}%
    \renewcommand\color[2][]{}%
  }%
  \providecommand\transparent[1]{%
    \errmessage{(Inkscape) Transparency is used (non-zero) for the text in Inkscape, but the package 'transparent.sty' is not loaded}%
    \renewcommand\transparent[1]{}%
  }%
  \providecommand\rotatebox[2]{#2}%
  \newcommand*\fsize{\dimexpr\f@size pt\relax}%
  \newcommand*\lineheight[1]{\fontsize{\fsize}{#1\fsize}\selectfont}%
  \ifx\svgwidth\undefined%
    \setlength{\unitlength}{157.76516411bp}%
    \ifx\svgscale\undefined%
      \relax%
    \else%
      \setlength{\unitlength}{\unitlength * \real{\svgscale}}%
    \fi%
  \else%
    \setlength{\unitlength}{\svgwidth}%
  \fi%
  \global\let\svgwidth\undefined%
  \global\let\svgscale\undefined%
  \makeatother%
  \begin{picture}(1,0.45637451)%
    \lineheight{1}%
    \setlength\tabcolsep{0pt}%
    \put(0,0){\includegraphics[width=\unitlength,page=1]{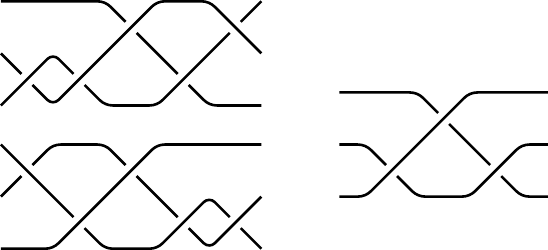}}%
    \put(0.50701762,0.34034166){\color[rgb]{0,0,0}\rotatebox{-45}{\makebox(0,0)[lt]{\lineheight{1.25}\smash{\begin{tabular}[t]{l}$\ra$\end{tabular}}}}}%
    \put(0.52031265,0.08826232){\color[rgb]{0,0,0}\rotatebox{45}{\makebox(0,0)[lt]{\lineheight{1.25}\smash{\begin{tabular}[t]{l}$\ra$\end{tabular}}}}}%
  \end{picture}%
\endgroup%

			\caption{
				Undoing flyped tongues~\cite{Tsukamoto2006} to cancel dual sub-braids.
			} \label{fig:untongue}
		\end{figure}
	\end{center}

	\begin{center}
		\begin{figure}[h]
			\centering
			\def\svgwidth{0.8\textwidth}
\begingroup%
  \makeatletter%
  \providecommand\color[2][]{%
    \errmessage{(Inkscape) Color is used for the text in Inkscape, but the package 'color.sty' is not loaded}%
    \renewcommand\color[2][]{}%
  }%
  \providecommand\transparent[1]{%
    \errmessage{(Inkscape) Transparency is used (non-zero) for the text in Inkscape, but the package 'transparent.sty' is not loaded}%
    \renewcommand\transparent[1]{}%
  }%
  \providecommand\rotatebox[2]{#2}%
  \newcommand*\fsize{\dimexpr\f@size pt\relax}%
  \newcommand*\lineheight[1]{\fontsize{\fsize}{#1\fsize}\selectfont}%
  \ifx\svgwidth\undefined%
    \setlength{\unitlength}{292.87507065bp}%
    \ifx\svgscale\undefined%
      \relax%
    \else%
      \setlength{\unitlength}{\unitlength * \real{\svgscale}}%
    \fi%
  \else%
    \setlength{\unitlength}{\svgwidth}%
  \fi%
  \global\let\svgwidth\undefined%
  \global\let\svgscale\undefined%
  \makeatother%
  \begin{picture}(1,0.15620995)%
    \lineheight{1}%
    \setlength\tabcolsep{0pt}%
    \put(0,0){\includegraphics[width=\unitlength,page=1]{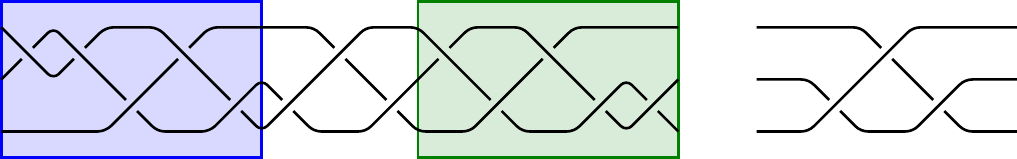}}%
    \put(0.68848529,0.07554416){\color[rgb]{0,0,0}\makebox(0,0)[lt]{\lineheight{1.25}\smash{\begin{tabular}[t]{l}$\ra$\end{tabular}}}}%
  \end{picture}%
\endgroup%

			\caption{
				Cancellation of dual sub-braids for $ (b_1, \dots, b_k) = (2, 2, 3, 3) $ and $ (c_l, \dots, c_1) = (2, 3, 4) $ with $ x_l = x_r = y_l = y_r = 0 $. Fixing the ends on the braid shown, one may remove all crossings in $ B $ and $ C $ via moves illustrated in Figure~\ref{fig:untongue}.
			} \label{fig:cancel-duals}
		\end{figure}
	\end{center}

	\begin{center}
		\begin{figure}[h]
			\centering
			\def\svgwidth{1.1\textwidth}
			\makebox[\textwidth][c]{
\begingroup%
  \makeatletter%
  \providecommand\color[2][]{%
    \errmessage{(Inkscape) Color is used for the text in Inkscape, but the package 'color.sty' is not loaded}%
    \renewcommand\color[2][]{}%
  }%
  \providecommand\transparent[1]{%
    \errmessage{(Inkscape) Transparency is used (non-zero) for the text in Inkscape, but the package 'transparent.sty' is not loaded}%
    \renewcommand\transparent[1]{}%
  }%
  \providecommand\rotatebox[2]{#2}%
  \newcommand*\fsize{\dimexpr\f@size pt\relax}%
  \newcommand*\lineheight[1]{\fontsize{\fsize}{#1\fsize}\selectfont}%
  \ifx\svgwidth\undefined%
    \setlength{\unitlength}{450bp}%
    \ifx\svgscale\undefined%
      \relax%
    \else%
      \setlength{\unitlength}{\unitlength * \real{\svgscale}}%
    \fi%
  \else%
    \setlength{\unitlength}{\svgwidth}%
  \fi%
  \global\let\svgwidth\undefined%
  \global\let\svgscale\undefined%
  \makeatother%
  \begin{picture}(1,0.35166669)%
    \lineheight{1}%
    \setlength\tabcolsep{0pt}%
    \put(0,0){\includegraphics[width=\unitlength,page=1]{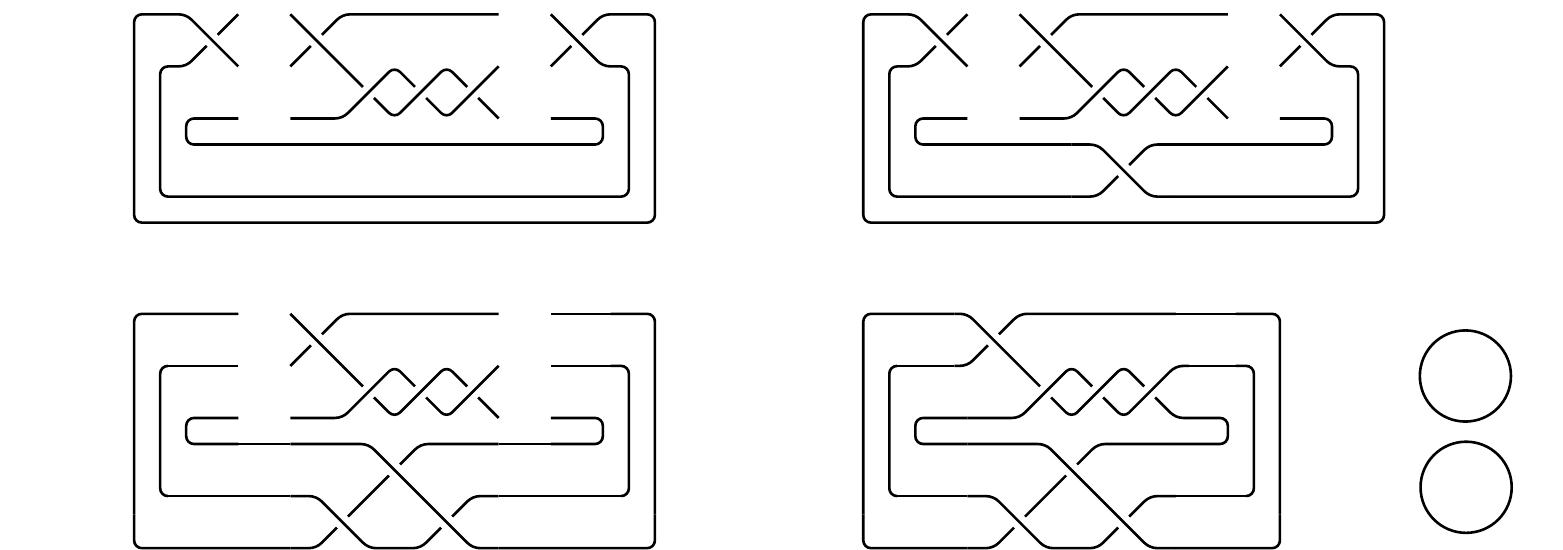}}%
    \put(0.44249238,0.25916669){\color[rgb]{0,0,0}\makebox(0,0)[lt]{\lineheight{1.25}\smash{\begin{tabular}[t]{l}$\xxra{0}[(1)]{\textrm{expand band}}$\end{tabular}}}}%
    \put(-0.00084093,0.07249999){\color[rgb]{0,0,0}\makebox(0,0)[lt]{\lineheight{1.25}\smash{\begin{tabular}[t]{l}$\xxra{0}[(2)]{\textrm{isotopy}}$\end{tabular}}}}%
    \put(0.44582571,0.07249999){\color[rgb]{0,0,0}\makebox(0,0)[lt]{\lineheight{1.25}\smash{\begin{tabular}[t]{l}$\xxra{0}[(3)]{\textrm{cancel duals}}$\end{tabular}}}}%
    \put(0.83915914,0.07250004){\color[rgb]{0,0,0}\makebox(0,0)[lt]{\lineheight{1.25}\smash{\begin{tabular}[t]{l}$\xxra{0}[(4)]{\textrm{isotopy}}$\end{tabular}}}}%
    \put(0,0){\includegraphics[width=\unitlength,page=2]{S2a.pdf}}%
    \put(0.22249241,0.23770185){\color[rgb]{0,0,0}\makebox(0,0)[lt]{\lineheight{1.25}\smash{\begin{tabular}[t]{l}$-1$\end{tabular}}}}%
  \end{picture}%
\endgroup%
}
			\caption{
				Band move for the $ \mathcal{S}_{2a} $ case.
			} \label{fig:S2a}
		\end{figure}
	\end{center}

	\afterpage{\clearpage}
		\begin{figure}[p]
			\centering
			\def\svgwidth{1.1\textwidth}
			\makebox[\textwidth][c]{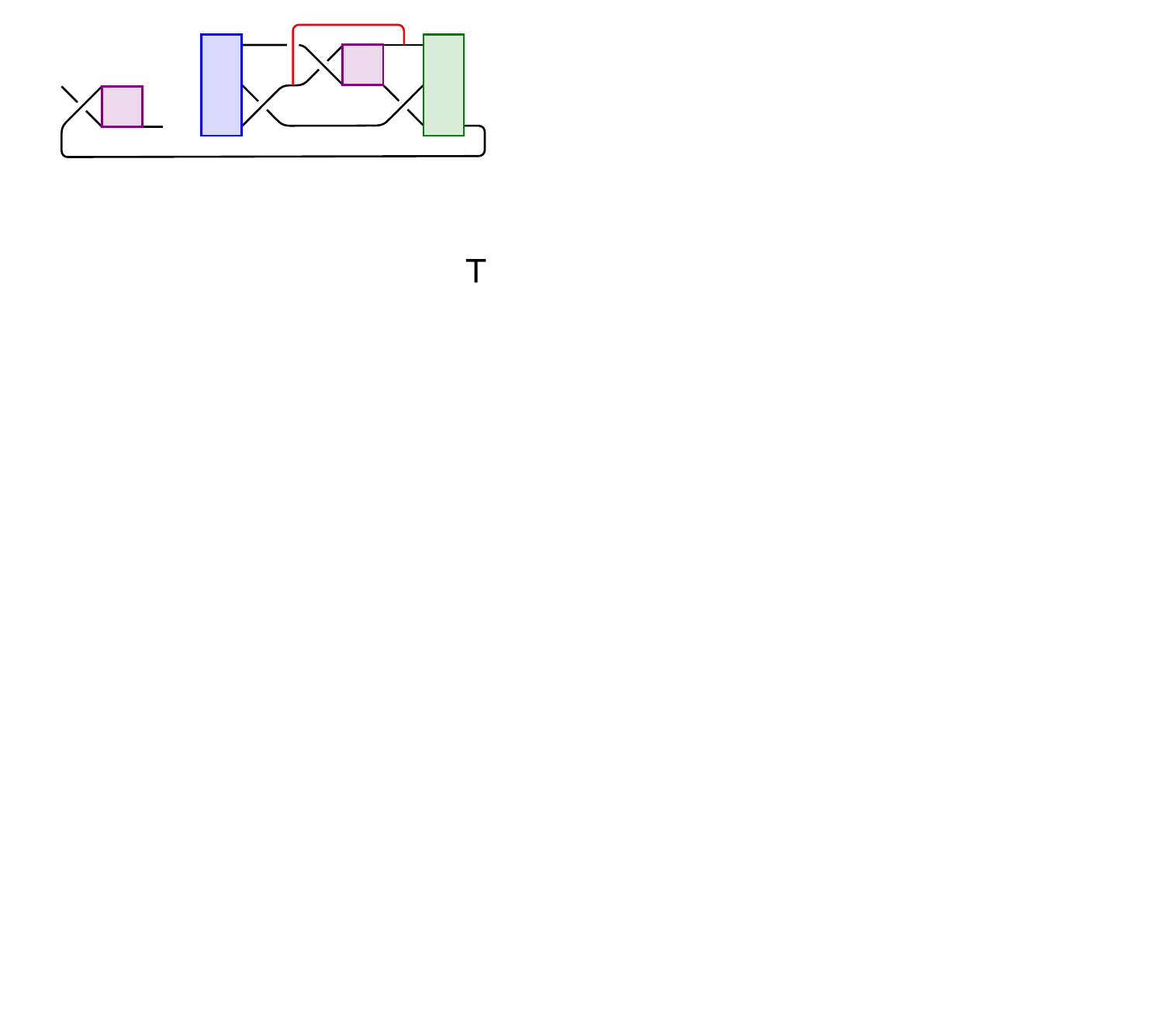}
			\caption{
				Band moves for the $ \mathcal{S}_{2b} $ case. Start with the top left diagram if the two segments highlighted in purple do not lie on the same strand, otherwise start with the top right; this ensures that after step (2), the tangle $ \mathsf{T} $ does not lie on the otherwise unknotted split component. The nontrivial component of the link obtained after step (3) is the connected sum $ T(2, x + 2)\, \#\, T(2, - (x + 2)) $ of two torus links.
			} \label{fig:S2b}
		\end{figure}

	\afterpage{\clearpage}
		\begin{figure}[p]
			\centering
			\def\svgwidth{1.1\textwidth}
			\makebox[\textwidth][c]{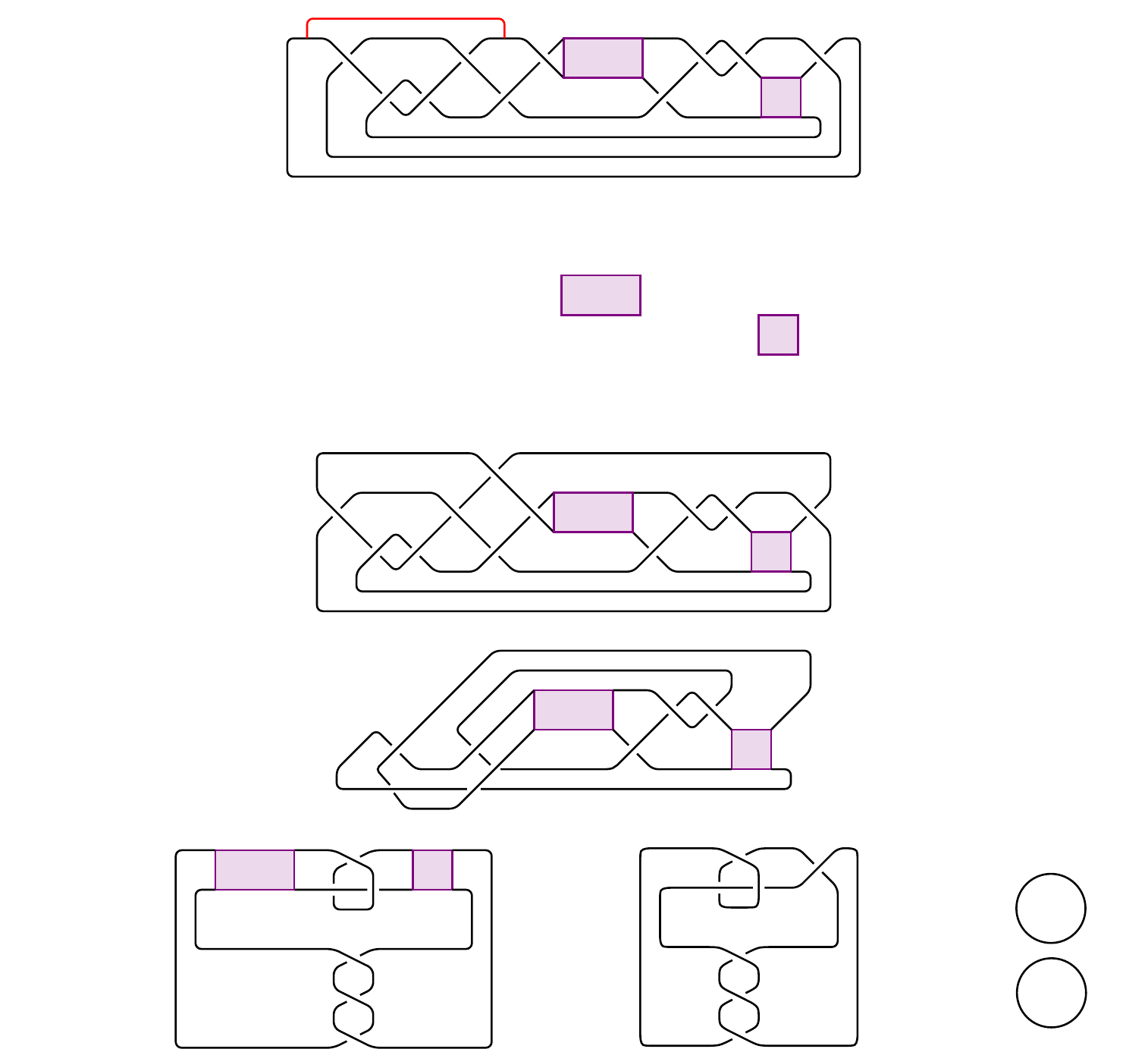}
			\caption{
				Band moves for the $ \mathcal{S}_{2d} $ case with $ x \geqslant 1 $. In step (5), we undo $ x - 1 $ crossings in both blocks by flyping the tangle on the bottom of the diagram and performing Reidemeister II moves. A similar band gives the two-component unlink for the alternating 3-braid closure with associated string $ (2, 2, 2, 4, 4) $.
			} \label{fig:S2d}
		\end{figure}

	\newpage
	\begin{center}
		\begin{figure}[h]
			\centering
			\def\svgwidth{1.1\textwidth}
			\makebox[\textwidth][c]{
\begingroup%
  \makeatletter%
  \providecommand\color[2][]{%
    \errmessage{(Inkscape) Color is used for the text in Inkscape, but the package 'color.sty' is not loaded}%
    \renewcommand\color[2][]{}%
  }%
  \providecommand\transparent[1]{%
    \errmessage{(Inkscape) Transparency is used (non-zero) for the text in Inkscape, but the package 'transparent.sty' is not loaded}%
    \renewcommand\transparent[1]{}%
  }%
  \providecommand\rotatebox[2]{#2}%
  \newcommand*\fsize{\dimexpr\f@size pt\relax}%
  \newcommand*\lineheight[1]{\fontsize{\fsize}{#1\fsize}\selectfont}%
  \ifx\svgwidth\undefined%
    \setlength{\unitlength}{391.12941472bp}%
    \ifx\svgscale\undefined%
      \relax%
    \else%
      \setlength{\unitlength}{\unitlength * \real{\svgscale}}%
    \fi%
  \else%
    \setlength{\unitlength}{\svgwidth}%
  \fi%
  \global\let\svgwidth\undefined%
  \global\let\svgscale\undefined%
  \makeatother%
  \begin{picture}(1,0.38949001)%
    \lineheight{1}%
    \setlength\tabcolsep{0pt}%
    \put(0,0){\includegraphics[width=\unitlength,page=1]{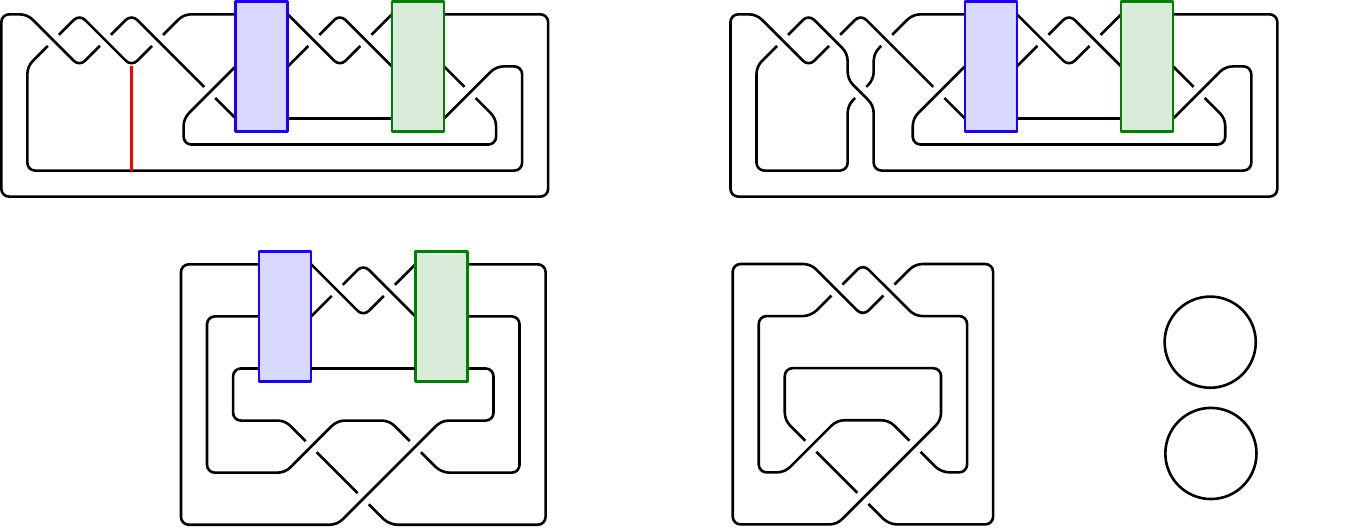}}%
    \put(0.06615458,0.30416023){\makebox(0,0)[lt]{\lineheight{1.25}\smash{\begin{tabular}[t]{l}$-1$\end{tabular}}}}%
    \put(0.4218466,0.30966305){\color[rgb]{0,0,0}\makebox(0,0)[lt]{\lineheight{1.25}\smash{\begin{tabular}[t]{l}$\xxra{0}[(1)]{\textrm{expand band}}$\end{tabular}}}}%
    \put(0.01916654,0.09298297){\color[rgb]{0,0,0}\makebox(0,0)[lt]{\lineheight{1.25}\smash{\begin{tabular}[t]{l}$\xxra{0}[(2)]{\textrm{isotopy}}$\end{tabular}}}}%
    \put(0.4218466,0.09298297){\color[rgb]{0,0,0}\makebox(0,0)[lt]{\lineheight{1.25}\smash{\begin{tabular}[t]{l}$\xxra{0}[(3)]{\textrm{cancel duals}}$\end{tabular}}}}%
    \put(0.75933063,0.09298297){\color[rgb]{0,0,0}\makebox(0,0)[lt]{\lineheight{1.25}\smash{\begin{tabular}[t]{l}$\xxra{0}[(4)]{\textrm{isotopy}}$\end{tabular}}}}%
  \end{picture}%
\endgroup%
}
			\caption{
				Band move for the $ \mathcal{S}_{2e} $ case. A similar band move gives the two-component unlink for the alternating 3-braid closure with associated string $ (2,2,2,3) $.
			} \label{fig:S2e}
			\vspace{-2em}
		\end{figure}
	\end{center}

	In searching for the band moves in~\Cref{fig:S2a,fig:S2b,fig:S2d,fig:S2e}, we have used the algorithm of Owens and Swenton implemented in the \textsf{KLO} program~\cite{Owens2020ribbonalg}. The band moves we exhibit for these four families of alternating 3-braid closures are \emph{algorithmic} in the sense of~\cite{Owens2020ribbonalg}.

	\section{The case of $\mathcal{S}_{2c} \setminus (\mathcal{S}_{2a} \cup \mathcal{S}_{2b} \cup \mathcal{S}_{2d} \cup \mathcal{S}_{2e}) $}
	\label{sec:S2c}

	The remaining $ \mathcal{S}_{2c} $ family is of special interest because it contains strings associated to known examples of non-slice, non-zero determinant alternating 3-braid closures, specifically Turk's head knots $ K_7 $~\cite{sartori2010}, $ K_{11} $, $ K_{17} $ and $ K_{23} $~\cite{ammmps2020branched}; the associated string of $ K_i $ for $ i \in \{ 7, 11, 17, 23 \} $ is $ (3^{[i]}) $. Thus, we should not expect to find a set of band moves for all links with strings in $ \mathcal{S}_{2c} $. We also note that knots of finite concordance order belonging to Family (3) in~\cite{lisca-3braids} have associated strings in $ \mathcal{S}_{2c} $.

	We have that $ \mathcal{S}_{2c} \cap \mathcal{S}_{2d} = \mathcal{S}_{2c} \cap \mathcal{S}_{2e} = \varnothing $: this can be seen by computing the $ I(\mathbf{a}) = \sum_{a \in \mathbf{a}} 3 - a $ invariant~\cite{lisca-rational} which is 0 for strings in $ \mathcal{S}_{2c} $, but 1 or 3 for strings in $ \mathcal{S}_{2d} $ or $ \mathcal{S}_{2e} $, respectively.\footnote{Observe that if $ \mathbf{b} = (b_1, \dots, b_k) $ and $ \mathbf{c} = (c_1, \dots, c_l) $ are linearly dual to each other and $ k + l \geqslant 2 $, then $ I(\mathbf{b} \vert \mathbf{c}) = 0 $.} However, $ \mathcal{S}_{2c} $ has nonzero intersection with $ \mathcal{S}_{2a} $ and $ \mathcal{S}_{2b} $: if one defines a \emph{palindrome} to be a string $(a_1, \dots, a_n)$ such that $ a_i = a_{n-(i-1)} $ for all $1 \leqslant i \leqslant n $, then the following lemma holds.

	\begin{lem}[\protect{\cite[Lemma 3.6]{simone2020classification}}]
	\label{lem:palindromes}
		Let $\mathbf{a}=(b_1+3,b_2,\dots,b_k,2,c_l,\dots,c_1)\in\mathcal{S}_{2a}$ and $\mathbf{b}=(3+x,b_1,\dots,b_{k-1},b_k+1,2^{[x]},c_l+1,c_{l-1},\dots,c_1)\in\mathcal{S}_{2b}.$ Then $\mathbf{a}\in \mathcal{S}_{2c}$ if and only if $(b_1+1,b_2,\dots,b_k)$ is a palindrome and $\mathbf{b}\in\mathcal{S}_{2c}$ if and only if $(b_1\dots,b_k)$ is a palindrome.
	\end{lem}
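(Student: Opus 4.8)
Write $\mathbf{d}^{\ast}$ for the linear dual of a string $\mathbf{d}$. The statement is purely combinatorial, so the plan is to work with strings and three standard properties of $\ast$: it is an involution on nonempty strings all of whose entries are $\geqslant 2$; it commutes with reversal, $(\mathbf{d}^{\ast})^{\mathrm{rev}} = (\mathbf{d}^{\mathrm{rev}})^{\ast}$; and it is compatible with increasing a boundary entry, $(d_1 + t, d_2, \dots, d_r)^{\ast} = (2^{[t]}) \mid \mathbf{d}^{\ast}$ for $t \geqslant 0$, and dually for the last entry (all three read off the Riemenschneider point rule). From the explicit formula for $\mathcal{S}_{2c}$ I would then extract the intrinsic description: a string $\mathbf{a}$ lies in $\mathcal{S}_{2c}$, up to cyclic rotation and reversal, if and only if it has a cyclic decomposition $\mathbf{a} = \mathbf{P} \mid \mathbf{Q}$ in which $\mathbf{P}$ begins and ends with an entry $\geqslant 3$ and $\mathbf{P}^{\ast} = (2) \mid \mathbf{Q} \mid (2)$; here $\mathbf{P}$ is the ``odd'' half $(3 + x_1, 2^{[x_2]}, \dots, 3 + x_{2k+1})$ and $\mathbf{Q}$ the other half, and the identity for $\mathbf{P}^{\ast}$ is a direct check (the condition is manifestly unchanged under reversal of $\mathbf{a}$, so reversals cause no trouble).

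For the ``if'' parts I would exhibit the decomposition. In the $\mathcal{S}_{2a}$ case take $\mathbf{P} = (b_1 + 3, b_2, \dots, b_k)$ and $\mathbf{Q} = (2, c_l, \dots, c_1)$. The palindrome hypothesis forces $b_k = b_1 + 1 \geqslant 3$, so $\mathbf{P}$ is admissible, and the boundary-increment property gives $\mathbf{P}^{\ast} = (2,2) \mid (b_1 + 1, b_2, \dots, b_k)^{\ast}$, while $(2) \mid \mathbf{Q} \mid (2) = (2,2) \mid (b_k, \dots, b_2, b_1 + 1)^{\ast}$ (using also that $\ast$ commutes with reversal on $\mathbf{c}^{\mathrm{rev}}$); since $\ast$ is injective these coincide exactly when $(b_1 + 1, b_2, \dots, b_k)$ equals its own reversal, i.e. is a palindrome. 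The $\mathcal{S}_{2b}$ case is parallel: take $\mathbf{P} = (3 + x, b_1, \dots, b_{k-1}, b_k + 1)$ and $\mathbf{Q} = (2^{[x]}, c_l + 1, c_{l-1}, \dots, c_1)$; the same properties, together with the identity $\bigl((2) \mid (b_1, \dots, b_k)\bigr)^{\ast} = (c_1 + 1, c_2, \dots, c_l)$, reduce $\mathbf{P}^{\ast} = (2) \mid \mathbf{Q} \mid (2)$ to $(c_1 + 1, c_2, \dots, c_l) = (c_l + 1, c_{l-1}, \dots, c_1)$, which holds iff $\mathbf{c} = (c_1, \dots, c_l)$, equivalently $\mathbf{b} = (b_1, \dots, b_k)$, is a palindrome.

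For the ``only if'' parts I would run these computations backwards; the real content then becomes showing that the decomposition is essentially unique: if an $\mathcal{S}_{2a}$ or $\mathcal{S}_{2b}$ string $\mathbf{a}$ admits any cyclic decomposition $\mathbf{a} = \mathbf{P}' \mid \mathbf{Q}'$ of the above type, then $\{\mathbf{P}', \mathbf{Q}'\}$ is, up to reversal, the pair produced in the previous paragraph. The available constraints are: the weight identity $\sum_{p \in \mathbf{P}'}(p - 1) = |\mathbf{a}| + 1$ (forced by $|\mathbf{d}| + |\mathbf{d}^{\ast}| = \sum_i (d_i - 1) + 1$ together with $|\mathbf{P}'^{\ast}| = |\mathbf{Q}'| + 2$); the requirement that $\mathbf{P}'$ begin and end with entries $\geqslant 3$; and the observation that an interior entry $\geqslant 5$ of $\mathbf{P}'^{\ast}$ forces a run of at least two consecutive $2$'s inside $\mathbf{P}'$. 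Tracking the positions, relative to the cut, of the distinguished large entry ($b_1 + 3 \geqslant 5$ for $\mathcal{S}_{2a}$; the pattern around $b_k + 1$, the block $2^{[x]}$ and $c_l + 1$ for $\mathcal{S}_{2b}$) and of the distinguished block of $2$'s, these constraints should leave only the expected cut, after which the computation of the second paragraph forces the palindrome condition.

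I expect this rigidity statement to be the crux. It is a finite but somewhat delicate case analysis, because $(b_1, \dots, b_k)$ and $(c_1, \dots, c_l)$ may themselves contain $2$'s, so the large entry and the distinguished block of $2$'s are not automatically isolated and one must rule out spurious cuts by hand (the $\mathcal{S}_{2b}$ case with $x = 0$ being the least protected). Everything else reduces to short manipulations with the three properties of $\ast$ and the intrinsic description of $\mathcal{S}_{2c}$.
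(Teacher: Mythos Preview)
This lemma is not proved in the paper at all; it is quoted from \cite[Lemma~3.6]{simone2020classification} and then used as a black box in the proof of the next result (Lemma~\ref{lem:x-seq}). So there is no in-paper argument for you to be compared against, and your proposal should be read as an attempt to supply what the paper outsources.

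On its own merits: your intrinsic description of $\mathcal{S}_{2c}$ via a cyclic cut $\mathbf{a}=\mathbf{P}\mid\mathbf{Q}$ with $\mathbf{P}^\ast=(2)\mid\mathbf{Q}\mid(2)$ is correct, and together with the three properties of $\ast$ it makes the ``if'' direction clean. For ``only if'' you have correctly isolated the difficulty---one must show that if \emph{some} cyclic cut of the given $\mathcal{S}_{2a}$- or $\mathcal{S}_{2b}$-string satisfies the $\mathcal{S}_{2c}$ relation then the expected cut does---but you do not resolve it. Two cautions. First, literal uniqueness of the cut is false already for $(3^{[n]})$, where every rotation gives a valid cut, so at best you can hope for uniqueness up to the dihedral symmetries of the cyclic word; make sure your case analysis is phrased that way. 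Second, the constraints you list (the weight identity, endpoint conditions $\geqslant 3$, and runs of $2$'s forced by large interior entries of $\mathbf{P}'^\ast$) do not by themselves pin down the cut in the case you yourself flag: for $\mathcal{S}_{2b}$ with $x=0$ the string is $(3,b_1,\dots,b_k+1,c_l+1,\dots,c_1)$, and since $b_k+1,c_l+1\geqslant 3$ there are \emph{a priori} several admissible right endpoints for $\mathbf{P}'$; the distinguished entry $3+x$ has collapsed to $3$, which is no longer distinguished. So as written this is a plan rather than a proof, with the hard step deferred. One concrete way to close it, in the spirit of the paper's proof of Lemma~\ref{lem:x-seq}, is to parametrise $(2,c_l,\dots,c_1)$ (respectively $(b_1,\dots,b_k)$) by its own block decomposition $(2^{[z_1]},3+z_2,\dots)$, write $\mathbf{a}$ out entirely in the $z_i$, and then match against the template $(3+x_1,2^{[x_2]},\dots)$ entry by entry; the palindrome condition then emerges as the consistency relations among the $z_i$, and the matching itself handles the rotational ambiguity because the number of ``$3+$'' entries is fixed.
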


	We seek to find an easier description of the complement $\mathcal{S}_{2c}^\dagger := \mathcal{S}_{2c} \setminus (\mathcal{S}_{2a} \cup \mathcal{S}_{2b} \cup \mathcal{S}_{2d} \cup \mathcal{S}_{2e}) $. Let
		\[
		\tag{$*$}
			\mathbf{c} = (3+x_1,2^{[x_2]},3+x_3,2^{[x_4]},\dots,3+x_{2k+1},2^{[x_1]}, 3+x_2,2^{[x_3]},\dots,3+x_{2k},2^{[x_{2k+1}]}) \in \mathcal{S}_{2c},
		\]
	where $ k \geqslant 0 $ and $ x_i \geqslant 0 $ for all $ i $. One can more compactly describe $ \mathbf{c} $ by its \emph{$ \mathbf{x} $-string} $ \mathbf{x}(\mathbf{c}) = [x_1, \dots, x_{2k+1}] $ (we use square brackets to denote $ \mathbf{x} $-strings and, as with associated strings, consider them up to cyclic rotations and reversals). For example, the $ \mathbf{x} $-string of $ (3^{[i]}) $ associated with $ K_i $ is $ [0^{[i]}] $. Also, when writing $ \mathbf{c} $ in the form $ (*) $ with the first element being at least 3, call every maximal substring of the form $ (2^{[x]}) $ or $ (3 + x) $ for $ x \geqslant 0 $ an \emph{entry}; the total number of entries $ e(\mathbf{c}) $ in $ \mathbf{c} $ is congruent to $ 2 $ mod $ 4 $.

\begin{lem}
	\label{lem:x-seq}
		Let $\mathbf{a}=(b_1+3,b_2,\ldots,b_k,2,c_l,\ldots,c_1)\in\mathcal{S}_{2a}\cap\mathcal{S}_{2c}$ and $\mathbf{b}=(3+y,b_1,\ldots,b_{k-1},b_k+1,2^{[y]},c_l+1,c_{l-1},\ldots,c_1)\in\mathcal{S}_{2b}\cap\mathcal{S}_{2c}.$ Then
			\begin{myitemize}
				\item $ \mathbf{x}(\mathbf{a}) = [z_1] $ with $ z_1 \geqslant 1 $ or $ \mathbf{x}(\mathbf{a}) = [z_1, \dots, z_{\left \lfloor{\frac{n}{2}} \right \rfloor}, z_{\left \lfloor{\frac{n}{2}} \right \rfloor + 1}, z_{\left \lfloor{\frac{n}{2}} \right \rfloor}, \dots, z_2, z_1 - 2] $ with $ z_1 \geqslant 2 $ and $ n \geqslant 3 $ odd;
				\item $ \mathbf{x}(\mathbf{b}) = [y, 0, z_2] $ or $ \mathbf{x}(\mathbf{b}) = [y, 0, z_2, z_3, \dots, z_{\frac{n}{2}}, z_{\frac{n}{2} + 1}, z_{\frac{n}{2}}, \dots, z_3, z_2 + 1] $ with $ n \geqslant 4 $ even.
			\end{myitemize}
		
	\end{lem}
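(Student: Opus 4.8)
The plan is to translate the palindrome conditions of Lemma~\ref{lem:palindromes} into the asserted shapes of the $\mathbf{x}$-strings using the combinatorics of linear duals. \textbf{Step 1: a duality identity.} First I would record the elementary fact that if $(c_1,\dots,c_l)$ is the linear dual of $(b_1,\dots,b_k)$, then the linear dual of $(b_1+1,b_2,\dots,b_k)$ is $(2,c_1,\dots,c_l)$. This follows from a short case check (on whether $b_1=2$ or $b_1\geqslant 3$) applied to the explicit linear-dual formula of the introduction, or, more transparently, from Riemenschneider's point rule: raising $b_1$ by one inserts a single point into a fresh leftmost column of the point diagram, which upon reading columns prepends the entry~$2$ to the dual. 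Since linear duality is an involution that commutes with string reversal, ``$(b_1+1,b_2,\dots,b_k)$ is a palindrome'' is then equivalent to ``$(2,c_1,\dots,c_l)$ is a palindrome'', i.e.\ to $c_l=2$ together with $(c_1,\dots,c_{l-1})$ a palindrome; likewise ``$(b_1,\dots,b_k)$ is a palindrome'' is equivalent to ``$(c_1,\dots,c_l)$ is a palindrome''. Feeding these equivalences into Lemma~\ref{lem:palindromes} turns the claim into a purely combinatorial statement about the shapes of $\mathbf{a}$ and $\mathbf{b}$.

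\textbf{Step 2: reading the $(*)$-form.} Next I would put $\mathbf{a}$ and $\mathbf{b}$ into the canonical form~$(*)$ and read off the $\mathbf{x}$-string. The crux is that the two blocks out of which $\mathbf{a}=(b_1+3,b_2,\dots,b_k)\vert(2)\vert(c_l,\dots,c_1)$ is assembled are ``complementary'': because $(b_1,\dots,b_k)$ and $(c_1,\dots,c_l)$ are linear duals, each maximal run of $2$'s in one block sits opposite an entry $\geqslant 3$ in the other and conversely, so that --- once one accounts for the perturbation $b_1\mapsto b_1+3$, the inserted~$2$, the value $c_l=2$, and the $\pm 1$ mismatch of the duality dictionary at the junctions --- the entries of $\mathbf{a}$, read cyclically, organise into exactly the two passes through a list $[x_1,\dots,x_{2k+1}]$ demanded by~$(*)$: one pass reads the $x_i$ off the $b$-block (the entries $\geqslant 3$ contributing the $3+x_i$ and the runs the $2^{[x_i]}$), the other off the $c$-block with the roles reversed. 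One normalises so that $3+x_1=b_1+3$, i.e.\ $x_1=b_1$. The palindromicity of $(b_1+1,b_2,\dots,b_k)$ (equivalently, of $(2,c_1,\dots,c_l)$) then forces the list $[x_1,\dots,x_{2k+1}]$ to be symmetric except that the asymmetry between the $+3$ on one end and the extra~$2$ on the other lowers the final entry by $2$; this yields $\mathbf{x}(\mathbf{a})=[z_1,\dots,z_{\lfloor n/2\rfloor},z_{\lfloor n/2\rfloor+1},z_{\lfloor n/2\rfloor},\dots,z_2,z_1-2]$ for some odd $n\geqslant 3$ (concretely $n=2k-1$). The computation for $\mathbf{b}\in\mathcal{S}_{2b}\cap\mathcal{S}_{2c}$ is the same in outline: here $(b_1,\dots,b_k)$ is itself a palindrome, so (since $c_1=c_l$ and the two strings are duals) at least one of $b_1,c_1$ is $\geqslant 3$ and hence the entry $3+y$ abuts an empty run of $2$'s; that empty run together with the block $2^{[y]}$ produces the prefix $[y,0,\dots]$, while the $+1$'s on $b_k$ and on $c_l$ are what turn the would-be last entry $z_2$ into $z_2+1$.

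\textbf{Step 3: degenerate cases, and the main obstacle.} The short $\mathbf{x}$-strings I would handle separately: when $(b_1,\dots,b_k)$ is empty (the dual of $(1)$) or has length one, the palindrome condition is vacuous, $\mathbf{a}=(3+z_1,2^{[z_1]})$ for some $z_1\geqslant 1$, and $\mathbf{x}(\mathbf{a})=[z_1]$; similarly the degenerate $\mathbf{b}$'s (those with $(b_1,\dots,b_k)$ of length at most two) give $\mathbf{x}(\mathbf{b})=[y,0,z_2]$. These are just the smallest instances of the general patterns. The step I expect to be the real work is Step~2: one has to follow carefully how the two junctions between the $b$-block and the $c$-block behave --- whether a run of $2$'s straddles a junction, the difference between $b_1=2$ and $b_1\geqslant 3$, and how $c_l=2$ amalgamates with the inserted~$2$ --- and then verify that, after all these bookkeeping adjustments, the indices of the $\mathbf{x}$-string really do assemble into the stated near-palindrome (with the correct parity of $n$), using throughout that $\mathbf{x}$-strings are only defined up to cyclic rotation and reversal. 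The conceptual engine behind everything, however, is simply that linear duality interchanges ``runs of $2$'s'' with ``entries $\geqslant 3$'' and commutes with reversal.
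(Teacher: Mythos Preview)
Your plan is essentially the paper's proof: invoke Lemma~\ref{lem:palindromes}, use the Step~1 duality identity (the paper phrases it as $(2,c_l,\dots,c_1)$ being the linear dual of $(b_k+1,\dots,b_1)$), parametrise one of the two halves explicitly in block form $(2^{[z_1]},3+z_2,\dots)$, substitute to write out $\mathbf{a}$ (resp.\ $\mathbf{b}$), and read off the $\mathbf{x}$-string by matching against~$(*)$ --- the palindrome constraints on the $z_i$ then drop out exactly as you describe. One small caution on your Step~2 for $\mathbf{b}$: ``at least one of $b_1,c_1\geqslant 3$'' does not by itself force $b_1\geqslant 3$, so to get the prefix $[y,0,\dots]$ you should invoke the $b\leftrightarrow c$ symmetry of the $\mathcal{S}_{2b}$ form under reversal-then-rotation to assume $b_1\geqslant 3$ without loss of generality (and your parenthetical ``$n=2k-1$'' conflates the lemma's $k$ with that of~$(*)$).
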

	\begin{proof}
		Consider $ \mathbf{a} $ and define $ \mathbf{a_c} = (2, c_l, \dots, c_1) $. Notice that $ \mathbf{a_c} $ is the linear dual of the string 
			\[
				\mathbf{a_c^*} = (b_k + 1, b_{k-1}, \dots, b_1),
			\]
		which by Lemma~\ref{lem:palindromes} must be a palindrome, and that
			$
				\mathbf{a} = (b_1 + 3, b_2, \dots, b_k \mid \mathbf{a_c}).
			$
		If $ (b_1, \dots, b_k) $ is the empty string, then $ \mathbf{a} = (2, 1) \notin \mathcal{S}_{2c} $. Otherwise, write
			\[
				\mathbf{a_c} = (2^{[z_1]}, 3 + z_2, \dots, 2^{[z_n]})
			\]
		for $ n \geqslant 1 $ odd and $ z_1 \geqslant 1 $. If $ n = 1 $, then $ \mathbf{a_c} = (2^{[z_1]}) $ and $ \mathbf{a} = (3 + z_1, 2^{[z_1]}) $, so $ \mathbf{x}(\mathbf{a}) = [z_1] $. If $ n > 1 $, then
			\[
			\tag{$**$}
				\mathbf{a_c^*} = (2 + z_1, 2^{[z_2]}, 3 + z_3, \dots, 2^{[z_{n-1}]}, 2 + z_n).
			\]
		Thus,
			\[
				\mathbf{a} = (3 + (z_n + 2), 2^{[z_{n-1}]}, \dots, 2^{[z_2]}, 1 + z_1, 2^{[z_1]}, 3 + z_2, \dots, 2^{[z_n]}).
			\]
		If $ z_1 = 1 $, then
			\[
				\mathbf{a} = (3 + (z_n + 2), 2^{[z_{n-1}]}, \dots, 3 + z_3, 2^{[z_1 + z_2 + 1]}, 3 + z_2, \dots, 2^{[z_n]})
			\]
		does not belong to $ \mathcal{S}_{2c} $ because $ e(\mathbf{a}) \equiv 0 $ mod $ 4 $. If $ z_1 > 1 $, then
			\[
				\mathbf{a} = (3 + (z_n + 2), 2^{[z_{n-1}]}, \dots, 2^{[z_2]}, 3 + (z_1 - 2), 2^{[z_1]}, 3 + z_2, \dots, 2^{[z_n]}) .
			\]
		Now, by considering $ (**) $ we see that $ \mathbf{a_c^*} $ is a palindrome if and only if
			\[
				z_1 = z_n + 2, \quad z_2 = z_{n-1}, \quad \dots, \quad z_{\left \lfloor{\frac{n}{2}} \right \rfloor} = z_{\left \lfloor{\frac{n}{2}} \right \rfloor + 2}
			\]
		so we conclude that $ \mathbf{a} \in \mathcal{S}_{2a} \cap \mathcal{S}_{2c} $ if and only if $ \mathbf{x}(\mathbf{a}) = [z_1] $ for $ z_1 \geqslant 1 $ or $$ \mathbf{x}(\mathbf{a}) = [z_1, z_2, \dots, z_{\left \lfloor{\frac{n}{2}} \right \rfloor}, z_{\left \lfloor{\frac{n}{2}} \right \rfloor + 1}, z_{\left \lfloor{\frac{n}{2}} \right \rfloor}, \dots, z_2, z_1 - 2] $$ for $ z_1 \geqslant 2 $ and $ n \geqslant 3 $ odd.

		Similarly, if $ (b_1, \dots, b_k) $ is empty, then $ \mathbf{b} = (3 + y, 2^{[y]}, 2) = (3 + y, 2^{[y + 1]}) \notin \mathcal{S}_{2c} $. If $ k = 1 $, then the linear dual of $ (b_1) $ with $ b_1 \geqslant 2 $ is $ (2^{[b_1 - 1]}) $, so
		\begin{align*}
			\mathbf{b} = (3 + y, 2^{[0]}, b_1 + 1, 2^{[y]}, 3 + 0, 2^{[b_1 - 2]})
			= (3 + y, 2^{[0]}, 3 + (b_1 - 2), 2^{[y]}, 3 + 0, 2^{[b_1 - 2]})
		\end{align*}
		is indeed in $ \mathcal{S}_{2c} $ and $ \mathbf{x}(\mathbf{b}) = [y, 0, b_1 - 2] $. If $ k > 1 $, write 
			\[
				(b_1, \dots, b_k) = (2^{[z_1]}, 3 + z_2, \dots, 2^{[z_{n-1}]}, 2 + z_n)
			\]
		for $ n \geqslant 2 $ even and $ z_n \geqslant 1 $; its linear dual is
			\[
				(c_1, \dots, c_l) = (2 + z_1, 2^{[z_2]}, 3 + z_3, \dots, 2^{[z_{n-2}]}, 3 + z_{n-1}, 2^{[z_n]}).
			\]
		When $ n = 2 $, we recover the $ k = 1 $ case above, so suppose $ n > 2 $. Then we have 
			\[
				\mathbf{b} = (3 + y, 2^{[z_1]}, \dots, 2^{[z_{n-1}]}, 3 + z_n, 2^{[y]}, 3 + 0, 2^{[z_{n}-1]}, 3 + z_{n-1}, 2^{[z_{n-2}]}, \dots, 3 + z_3, 2^{[z_2 + 1]}).
			\]
		By comparing this with $ (*) $, we see that $ z_1 $ (which corresponds to $ x_2 $) must be zero, and
			\[
				(b_1, \dots, b_k) = (3 + z_2, 2^{[z_3]}, \dots, 2^{[z_{n-1}]}, 3 + (z_n - 1)).
			\]
		The string $ (b_1, \dots, b_k) $ is thus a palindrome precisely when
			\[
				z_2 = z_n - 1, \quad z_3 = z_{n-1}, \quad \dots, \quad z_{\frac{n}{2}} = z_{\frac{n}{2} + 2},
			\]
		i.e., $ \mathbf{x}(\mathbf{b}) = [y, 0, z_2, z_3, \dots, z_{\frac{n}{2}}, z_{\frac{n}{2} + 1}, z_{\frac{n}{2}}, \dots, z_3, z_2 + 1] $. \qedhere
	\end{proof}

	In particular, we can draw the easy conclusion that if $ \mathbf{x}(\mathbf{c}) $ contains neither two adjacent elements differing by $ 2 $ nor a $ 0 $, then $ \mathbf{c} \in \mathcal{S}_{2c}^\dagger $. We now show that infinitely many $\chi$-ribbon links have their associated strings in $ \mathcal{S}_{2c}^\dagger $.

	\begin{lem}
	\label{lem:S2c_family}
		Let $ \widehat{\beta} $ be the closure of $ \beta = \sigma_1^{m + 1} (\sigma_2^{-1} \sigma_1)^2 \sigma_2^{-(m+1)} (\sigma_1 \sigma_2^{-1})^2 $ with the associated string $ \mathbf{c} = (3 + m, 3, 3, 2^{[m]}, 3, 3) $ and $ m \geqslant 3 $. Then $ \mathbf{c} \in \mathcal{S}_{2c}^\dagger $ and $ \widehat{\beta} $ admits a ribbon surface with a single 1-handle.
	\end{lem}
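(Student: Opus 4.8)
The plan is to establish the two assertions in turn: the membership $\mathbf{c}\in\mathcal{S}_{2c}^\dagger$ is purely combinatorial, while the ribbon surface would be produced by a single band move in the style of Section~\ref{sec:ribbons}.

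For the first assertion I would begin by writing $\mathbf{c}=(3+m,3,3,2^{[m]},3,3)$ in the shape $(*)$: taking $k=2$ and inserting empty $2^{[0]}$-entries between consecutive entries that are $\geqslant 3$ exhibits $\mathbf{c}$ as the $(*)$-string with $\mathbf{x}$-string $\mathbf{x}(\mathbf{c})=[m,0,0,0,0]$, so $\mathbf{c}\in\mathcal{S}_{2c}$. Since $I(\mathbf{c})=\bigl(3-(m+3)\bigr)+m(3-2)=-m+m=0$ and $\mathcal{S}_{2c}\cap(\mathcal{S}_{2d}\cup\mathcal{S}_{2e})=\varnothing$, we have $\mathbf{c}\notin\mathcal{S}_{2d}\cup\mathcal{S}_{2e}$. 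Finally, if $\mathbf{c}$ lay in $\mathcal{S}_{2a}$ or $\mathcal{S}_{2b}$, then---being in $\mathcal{S}_{2c}$---its $\mathbf{x}$-string would be one of those listed in Lemma~\ref{lem:x-seq}; but $\mathbf{x}(\mathbf{c})$ has five entries, so it is neither $[z_1]$ nor $[y,0,z_2]$, and every remaining possibility contains either the two entries $z_1$ and $z_1-2$ (differing by $2$) or the two entries $z_2$ and $z_2+1$ (differing by $1$), whereas $[m,0,0,0,0]$ only takes the values $0$ and $m\geqslant 3$ and hence has no two entries differing by $1$ or $2$. This contradiction gives $\mathbf{c}\in\mathcal{S}_{2c}^\dagger$.

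For the ribbon surface, the key observation is that, up to a cyclic rotation, $\mathbf{c}=(2^{[m]})\mid(3,3)\mid(m+3)\mid(3,3)$, and that $(2^{[m]})$ and $(m+1)$ are linear duals of one another. Applying the dual sub-braid construction described before Proposition~\ref{prop:ribbons} with $(b_1,\dots,b_k)=(2^{[m]})$, $(c_1,\dots,c_l)=(m+1)$, $\mathbf{b}'=(2^{[m]})$ (so $x_l=x_r=0$), $\mathbf{t}=(3,3)$, $\mathbf{c}'=(m+3)$ (so $y_l+y_r=2$) and $\mathbf{s}=(3,3)$, we obtain dual sub-braids $B$ and $C$ in $\widehat\beta$ with $B(\sigma_2\sigma_1\sigma_2)^{-1}C=(\sigma_2\sigma_1\sigma_2)^{-1}$. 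I would then exhibit, with the help of the Owens--Swenton algorithm~\cite{Owens2020ribbonalg}, a single band move of $\widehat\beta$ performed away from $B$ and $C$, after which an isotopy connects $B$ and $C$ by a half-twist $(\sigma_2\sigma_1\sigma_2)^{-1}$; cancelling $B$ and $C$ as in Figure~\ref{fig:untongue} then leaves a diagram built only from the two $(3,3)$-blocks, the four perturbation crossings and the band, which I expect to collapse to the two-component unlink by further isotopies (with a flype followed by Reidemeister~II moves to remove the $m$-dependent twisting, exactly as in the $\mathcal{S}_{2d}$ case of Figure~\ref{fig:S2d}). A single $1$-handle attached to the two $0$-handles bounded by that unlink is then the desired ribbon surface of Euler characteristic one, and I would record the reduction in one figure of the kind of Figures~\ref{fig:S2a}--\ref{fig:S2e}.

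The first paragraph is routine bookkeeping; the real work, and the main obstacle, is locating the correct band and then checking, uniformly in $m\geqslant 3$, that the link obtained after the band move and the cancellation of $B$ and $C$ really is the two-component unlink. I anticipate this to be a finite diagram chase whose only delicate point is keeping track of the half-twists contributed by the $2^{[m]}$-block and by the perturbation of $(m+1)$ to $(m+3)$.
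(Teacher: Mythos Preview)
Your combinatorial argument for $\mathbf{c}\in\mathcal{S}_{2c}^\dagger$ is correct and is exactly what the paper does, only spelled out in more detail: the paper simply records $\mathbf{x}(\mathbf{c})=[m,0,0,0,0]$ and appeals to Lemma~\ref{lem:x-seq}.

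For the ribbon surface your plan diverges from the paper. The argument in Figure~\ref{fig:S2c_family} does \emph{not} use dual sub-braid cancellation; it follows the pattern of the $\mathcal{S}_{2d}$ case (Figure~\ref{fig:S2d}) rather than that of $\mathcal{S}_{2a}$, $\mathcal{S}_{2b}$ or $\mathcal{S}_{2e}$. A single $0$-framed band is attached, and after an isotopy one is left with a tangle sitting between the two blocks of $m+1$ like-sign crossings; performing $m+1$ flypes of that tangle followed by Reidemeister~II moves removes all the $m$-dependent crossings at once (with separate endgames for $m$ even and $m$ odd), and a final isotopy yields the two-component unlink. The linear duality of $(2^{[m]})$ and $(m+1)$ is never invoked.

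Your route---cancel $B$ against $C$ first and then clean up an $m$-independent residue---is attractive because the last step would be a single finite check. But the step you yourself flag as ``the main obstacle'' is genuinely open: in each family where the paper uses dual cancellation, the band is placed across a separator consisting of a single $2$ or a block $2^{[x]}$, whereas here the putative dual pieces are flanked by $(3,3)$-blocks on both sides, and nothing in the paper shows how to manufacture the half-twist $(\sigma_2\sigma_1\sigma_2)^{-1}$ across such a separator with one band. So your approach is not demonstrably wrong, but it is speculative at precisely the decisive point, while the paper's direct flype argument is explicit.
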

	\begin{proof}
		We have $ \mathbf{x}(\mathbf{c}) = [m, 0, 0, 0, 0] $, so by Lemma~\ref{lem:x-seq}, $ \mathbf{c} \in \mathcal{S}_{2c}^\dagger $. For the band move, see Figure~\ref{fig:S2c_family}.
	\end{proof}

	\afterpage{\clearpage}
	\begin{figure}[p]
		\centering
		\def\svgwidth{0.96\textwidth}
		\makebox[\textwidth][c]{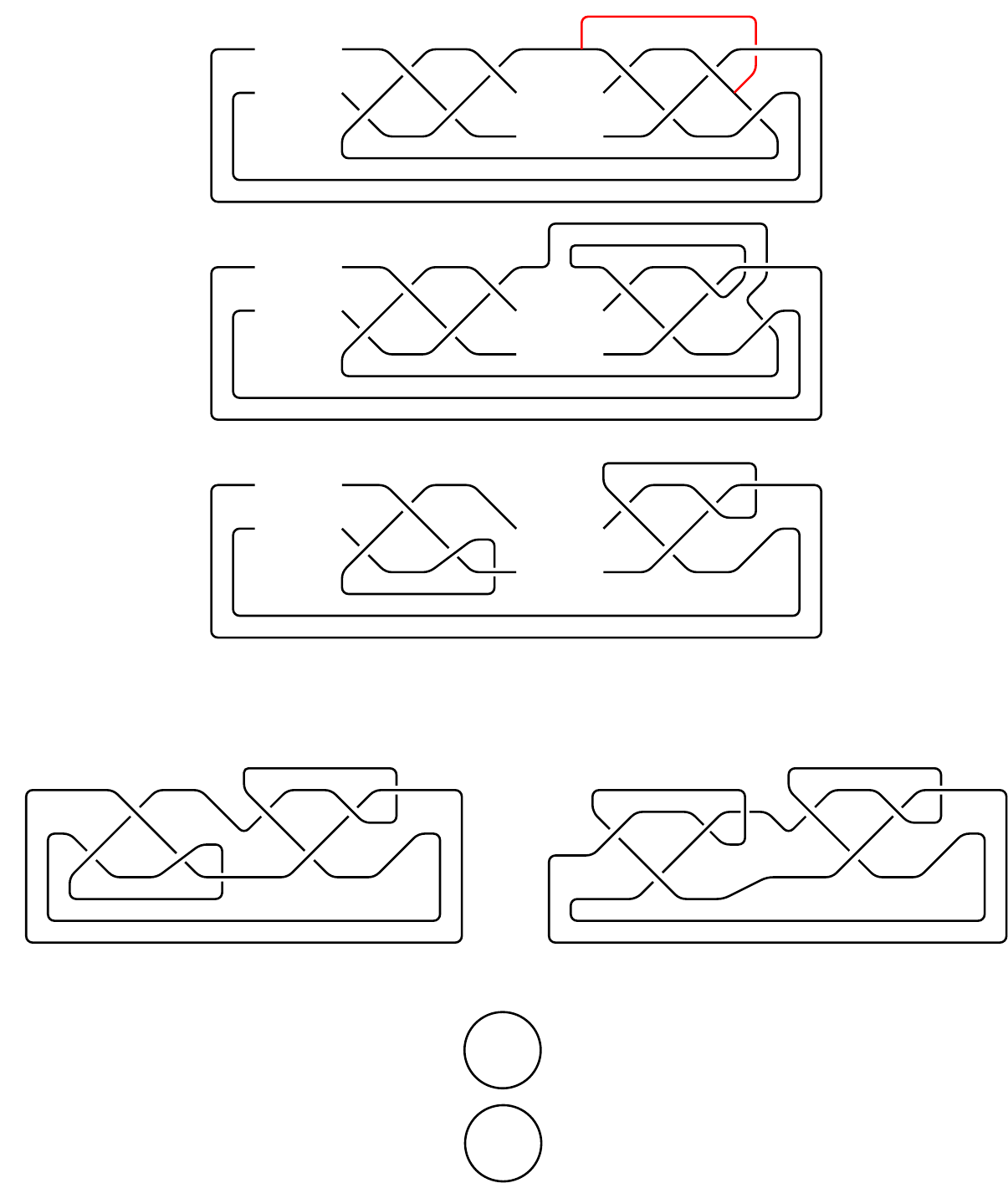}
		\caption{
			Band moves for an alternating 3-braid closure with $ \mathbf{x} $-string $ [m, 0, 0, 0, 0] $ for $ m \geqslant 3 $. In (3), we perform $ m + 1 $ flypes of the tangle between two blocks with $ m $ crossings followed by Reidemeister II moves.
		} \label{fig:S2c_family}
	\end{figure}

	Using \textsf{KLO}, we have found that 22 out of 33 closures of alternating 3-braids with up to 20 crossings whose associated strings belong to $ \mathcal{S}_{2c}^\dagger $ are algorithmically ribbon, in each instance via at most two band moves. It is known that the Turk's head knot $ K_7 $ with the associated string in $ \mathcal{S}_{2c}^\dagger $ and 14 crossings is not slice~\cite{sartori2010}. The remaining 10 examples for which we were unable to find band moves making their diagrams isotopic to the unlink are listed in Table~\ref{tab:S2c-nonalg}. By a straightforward application of the Gordon-Litherland signature formula~\cite[Theorems~6~and~6'']{gordonlitherland}, the signature of the closure of a braid $ \beta = \sigma_1^{a_1} \sigma_2^{-b_1} \dots \sigma_1^{a_n} \sigma_2^{-b_n} $ with $ \sum_i a_i $ and $ \sum_i b_i $ both greater than one is
		\[
			\sigma(\widehat{\beta}) = \sum_{i=1}^n b_i - a_i.
		\]
	Thus, for all links with associated strings in $ \mathcal{S}_{2a} \cup \mathcal{S}_{2b} \cup \mathcal{S}_{2c} $ satisfying this condition (in particular, for those in Table~\ref{tab:S2c-nonalg}), the signature vanishes, which means that for knots, so do the Ozsv\'ath and Sz\'abo's~$ \tau $ and Rasmussen's $ s $ invariants~\cite{ozsz2003,rasmussen2004} without giving us any sliceness obstructions; Tristram-Levine signatures for knots in Table~\ref{tab:S2c-nonalg} are also zero. Moreover, by comparing their hyperbolic volumes, we have verified that none of the entries in Table~\ref{tab:S2c-nonalg} belong to the list of `escapee' $\chi$-ribbon links described in~\cite{Owens2020ribbonalg}: this further advances them as candidates for more careful study. In Section~\ref{sec:tap} we will show that the three knots $ K_1 $, $ K_2 $ and $ K_3 $ in Table~\ref{tab:S2c-nonalg} are not slice, which lets us conclude that every knot which is a closure of an alternating 3-braid with up to 20 crossings and whose double branched cover bounds a rational ball, except $ K_1$, $ K_2 $, $ K_3 $ and $ K_7 $, is slice.

	\begin{center}{}
	\begin{table}[h]
		\vspace{1em}
		\centering
		\begin{tabular}{ccccc}
		\# of crossings & Associated string & $\mathbf{x}$-string & \# of components \\ 
		\hline
		\rule{0pt}{2.5ex}18 & $(3^{[9]})$ & $[0^{[9]}]$ & 3 \\ 
		18 & $(2,4,2,4,4,2,4,2,3)$   & $[1,1,1,1,0]$ & 1 \\ 
		18 & $(2,2,4,3,2,5,2,3,4)$   & $[2,1,0,0,1]$ & 1 \\ 
		18 & $(2,3,4,3,4,3,2,3,3)$   & $[1,0,0,0,1,0,0]$ & 1 \\ 
		20 & $(2,2,2,3,3,3,6,3,3,3)$ & $[3,0^{[6]}]$ & 3 \\ 
		20 & $(2,4,2,4,2,4,2,4,2,4)$ & $[1^{[5]}]$ & 3 \\ 
		20 & $(2,4,2,3,3,4,2,4,3,3)$ & $[1,1,1,0,0,0,0]$ & 3 \\ 
		20 & $(2,4,3,2,3,4,2,3,4,3)$ & $[1,1,0,0,1,0,0]$ & 3 \\ 
		20 & $(2,3,2,3,2,3,4,4,4,3)$ & $[1,0,1,0,1,0,0]$ & 3 \\ 
		20 & $(2,2,2,4,3,2,6,2,3,4)$ & $[3,1,0,0,1]$ & 3 \\ 
		\\
		\end{tabular}
		\caption{Links in $ \mathcal{S}_{2c}^\dagger $ with up to 20 crossings which are potentially non-$ \chi $-slice. In the following we show that the three knots in this table are not slice.\vspace{-1.5em}}
		\label{tab:S2c-nonalg}
	\end{table}
	\end{center}

	\begin{rem}
		We note that not all alternating knots can be represented as closures of alternating braids. This implies that our list of smoothly non-slice knots which are closures of alternating 3-braids with up to 20 crossings does not include, for example, the non-slice alternating knot $ 5_2 $, which has braid index~3, but cannot be represented as a closure of any alternating braid~\cite{cromwell1989}. A full classification of braid presentations of alternating links with braid index 3 has been given by Stoimenow in~\cite{stoimenow2003}.
	\end{rem}

	\section{Three more non-slice knots in $ \mathcal{S}_{2c}^\dagger $}
	\label{sec:tap}

	In this section we restrict our attention to the three knots in Table~\ref{tab:S2c-nonalg}. Let
	\begin{align*}
		\beta_1 &= \sigma_1^{2} \sigma_2^{-2} \sigma_1^{2} \sigma_2^{-2} \sigma_1 \sigma_2^{-2} \sigma_1^{2} \sigma_2^{-2} \sigma_1^{2} \sigma_2^{-1}, \\
		\beta_2 &= \sigma_1^{3} \sigma_2^{-2} \sigma_1 \sigma_2^{-1} \sigma_1^{2} \sigma_2^{-3} \sigma_1^{2} \sigma_2^{-1} \sigma_1 \sigma_2^{-2}, \\
		\beta_3 &=  \sigma_1^{2} \sigma_2^{-1} \sigma_1 \sigma_2^{-2} \sigma_1 \sigma_2^{-1} \sigma_1 \sigma_2^{-2} \sigma_1 \sigma_2^{-1} \sigma_1^{2} \sigma_2^{-1} \sigma_1 \sigma_2^{-1},
	\end{align*}
	and let $ K_i = \widehat{\beta}_i $ for $ i = 1, 2, 3 $. We will show that the knots $ K_i $ are not slice by adapting the approach of Aceto et al.~\cite{ammmps2020branched}, based in turn on work of Herald, Kirk and Livingston~\cite{heraldkirklivingston}, and demonstrating that certain reduced twisted Alexander polynomials do not factor as norms; this is a generalisation of the Fox-Milnor condition on Alexander polynomials of $ K_i $ which is passed by these knots. Fix distinct primes $ p $ and $ q $, and let $ \zeta_q $ denote a $ q^{\textrm{th}} $ root of unity. The general outline of the algorithm is the following:
		\begin{enumerate}[label=\arabic*.,leftmargin=0.1\linewidth]
			\item Construct the Seifert matrix $ S_i $ for $ K_i $ coming from the standard Seifert surface $ F_i $ associated to $ K_i $ viewed as a 3-braid closure.
			\item By considering the presentation matrix $ P_i = t S_i - S_i^T \in \mathrm{Mat}(\Z[t^{\pm 1}]) $ of the Alexander module $ \mathcal{A}(K_i) $, determine the structure of $ H_1(\Sigma_p(K_i)) $, the first homology of the $ p $-fold cover of $ S^3 $ branched over $ K_i $, as well as a basis of $ H_1(\Sigma_p(K_i)) $ given by lifts of curves in $ S^3 \setminus \nu(F) $.
			\item Calculate the Blanchfield pairings $ \textrm{Bl}_i : \mathcal{A}(K_i) \times \mathcal{A}(K_i) \ra \Q(t)/\Z[t^{\pm 1}]$ and deduce the linking pairings $ \lambda_i : H_1(\Sigma_p(K_i)) \times H_1(\Sigma_p(K_i)) \ra \Q / \Z$.
			\item Enumerate all $ \Z[t^{\pm 1}] $-submodules $ N $ of $ H_1(\Sigma_p(K_i)) $ with $ |N|^2 = | H_1(\Sigma_p(K_i)) | $ and thus find all metabolisers of $ H_1(\Sigma_p(K_i)) $, i.e., those $ N $ on which $ \lambda_i $ vanishes.
			\item Construct nontrivial characters $ \chi : H_1(\Sigma_3(K_i)) \ra \Z/q $ that vanish on the metabolisers.
			\item Using a Wirtinger presentation of $ \pi_1(X_i) $, where $ X_i $ is the knot complement of~$ K_i $, construct a certain homomorphism $ \pi_1(X_i) \ra \Z \ltimes H_1(\Sigma_p(K_i)) $ that induces a representation $ \varphi_\chi : \pi_1(X_i) \ra \GL(p, \Q(\zeta_q)[t^{\pm 1}])$ for each character in (5).
			\item Use the Fox matrix for a Wirtinger presentation of $ \pi_1(X_i) $ to obtain a matrix $ \Phi_\chi $ for each $ \chi $ in (5), whose determinant $ \det \Phi_\chi $ is the reduced twisted Alexander polynomial $ \widetilde{\Delta}_{K_i}^\chi(t) $.
			\item Verify that none of the $ \widetilde{\Delta}_{K_i}^\chi(t) $ factor as norms, hence providing an obstruction to sliceness of all $ K_i $.
		\end{enumerate}

	For reference about various terms used in this outline, we direct the reader in the first instance to~\cite{heraldkirklivingston} and \cite{ammmps2020branched}, as well as to the survey~\cite{friedlvidussi}. The computations were performed in \texttt{SageMath} notebooks available on the author's website\footnote{~\url{https://sites.google.com/view/vbrej}}.

	\subsection{The Seifert matrix}
	\label{subsec:seifert} Let $ \beta $ be a 3-braid. A Seifert surface $ F $ for $ \widehat{\beta} $ can be constructed by joining three discs $ D_1 $, $ D_2 $ and $ D_3 $ by half-twisted bands, where each band between $ D_1 $ and $ D_2 $ comes from a $ \sigma_1 $ term in $ \beta $, and each band between $ D_2 $ and $ D_3 $ from a $ \sigma_2 $ term; identify the bands with $ \sigma_i $'s. Let $ g $ be the genus of $ F $. We can choose the generators of $ H_1(F) $ to be the loops running once through consecutive $ \sigma_1 $'s and $ \sigma_2 $'s, except for the loop between the first and last $ \sigma_1 $ and the first and last $ \sigma_2 $. We order these generators $ s_1, \dots, s_{2g} $ by when the first $ \sigma_i $ through which $ s_j $ runs appears in $ \beta $. With this setup, the Seifert matrix $ S $ can be obtained using the algorithm of Collins~\cite{collins2016algorithm}. Such $ F $ with $ s_1, \dots, s_{2g} $ for $ K_1 $ is shown in Figure~\ref{fig:K1_seifert}. Also, for $ \nu(F) $ an open tubular neighbourhood of $ F $, denote by $ \widehat{s}_i $ a choice of a simple closed curve in $ S^3 \setminus \nu(F) $ that is \emph{Alexander dual} to $ \{ s_1, \dots, s_{2g} \} $, i.e., which satisfies $ \mathrm{lk}(s_i, \widehat{s}_j) = \delta_{ij} $.
	\begin{figure}[h]
		\centering
		\def\svgwidth{1.1\textwidth}
		\makebox[\textwidth][c]{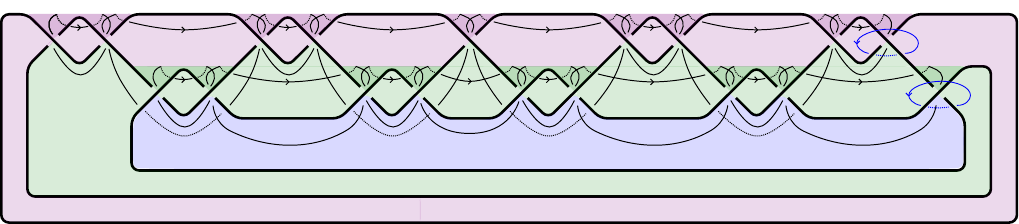}
		\caption{
			Our choice of a Seifert surface $ F_1 $ for $ K_1 $. Lifts of Alexander dual curves $ \widehat{s}_{15} $ and $ \widehat{s}_{16} $ will turn out to generate $ H_1(\Sigma_3(K_1)) $.
		} \label{fig:K1_seifert}
	\end{figure}

	\subsection{Structure and bases of triple branched covers $ H_1(\Sigma_3(K_i)) $} We may perform column operations on the presentation matrices $ P_i = t S_i - S_i^T $ of the Alexander modules $ \mathcal{A}(K_i) $ to transform them into the forms
	\[
		\left(
			\begin{array}{c | c  c}
				I & \multicolumn{2}{c}{0} \\
				\hline
				\multirow{2}{*}{$\ast$} & p_1(t) & 0 \\
				& 0 & p_1(t)
			\end{array}
		\right), \quad
		\left(
			\begin{array}{c | c  c  c}
				I & \multicolumn{3}{c}{0} \\
				\hline
				\multirow{3}{*}{$\ast$} & p_2(t) & 0 & 0 \\
				& 0 & 1 & 0 \\
				& 0 & * & p_2(t)
			\end{array}
		\right), \quad
		\left(
			\begin{array}{c | c  c}
				I & \multicolumn{2}{c}{0} \\
				\hline
				\multirow{2}{*}{$\ast$} & p_3(t) & 0 \\
				& 0 & p_3(t)
			\end{array}
		\right)
	\]
	for $ i = 1, 2, 3 $, respectively, where each $ p_i(t) $ is the square root of the untwisted Alexander polynomial $ \Delta_{K_i}(t) $, $ I $ is the identity matrix and $ * $ represents other entries. Specifically,
	\begin{align*}
		p_1(t) &= 1 - 3t + 7t^2 - 10t^3 + 11t^4 - 10t^5 + 7t^6 - 3t^7 + t^8, \\
		p_2(t) &= 1 - 3t + 6t^2 - 9t^3 + 11t^4 - 9t^5 + 6t^6 - 3t^7 + t^8, \\
		p_3(t) &= 1 - 4t + 8t^2 - 11t^3 + 13t^4 - 11t^5 + 8t^6 - 4t^7 + t^8.
	\end{align*}
	Recall that the Alexander module $ \mathcal{A}(K) $ of a knot $ K $ is the $ \Z[t^{\pm 1}] $-module $ H_1(\widetilde{X^\infty_K}) $, where $ \widetilde{X^\infty_K} $ is the infinite cyclic cover of the knot complement $ X_K $ and $ t $ acts by deck transformations. Choose a preferred copy of $ S^3 \setminus \nu(F_i) $ in $ \widetilde{X^\infty_{K_i}} $ for all $ i $. From~\cite[Theorems 1.3 and 1.4]{friedl2017calculation}, summarised in the present context in~\cite[Theorem 3.6]{ammmps2020branched}, it follows that
	\[
		\mathcal{A}(K_i) \cong \Z[t^{\pm 1}]/\langle p_i(t) \rangle \oplus \Z[t^{\pm 1}]/\langle p_i(t) \rangle,
	\]
	where $ \mathcal{A}(K_i) $ for $ i \in \{ 1, 3 \} $ is generated by the lifts of $ \widehat{s}_{15} $ and $ \widehat{s}_{16} $ to the preferred copy of $ S^3 \setminus \nu(F_i) $ in $ \widetilde{X^\infty_i} $, while $ \mathcal{A}(K_2) $ is generated by the lifts of $ \widehat{s}_{14} $ and $ \widehat{s}_{16} $; in each case, call these generators $ a $ and $ b $, respectively. Choose $ p = 3 $. By, e.g.,~\cite[Section~6.1]{feller2019balanced}, we have
	\begin{align*}
		H_1(\Sigma_3(K_i)) &\cong \mathcal{A}(K_i) / \langle t^2 + t + 1 \rangle \\
		&\cong \Z[t^{\pm 1}]/\langle p_i(t), t^2 + t + 1 \rangle \oplus \Z[t^{\pm 1}]/\langle p_i(t), t^2 + t + 1 \rangle \\
		&\cong \Z[t^{\pm 1}]/ \langle 7t, t^2 + t + 1 \rangle \oplus \Z[t^{\pm 1}]/ \langle 7t, t^2 + t + 1 \rangle \\
		&\cong (\Z/7)[t^{\pm 1}]/\langle t^2 + t + 1 \rangle \oplus (\Z/7)[t^{\pm 1}]/\langle t^2 + t + 1 \rangle
	\end{align*}
	in each of the three cases, since all of $ p_i(t) $ are congruent to $ 7t $ modulo $ t^2 + t + 1 $. Hence, we fix $ q = 7 $. The generators of $ \mathcal{A}(K_i) $ descend to $ H_1(\Sigma_3(K_i)) $, so by abuse of notation we also denote them by $ a $ and $ b $. As a group, $ H_1(\Sigma_3(K_i)) \cong (\Z/7)^4 $, and we may treat it as a $ (\Z/7) $-module generated by $ a, ta, b $ and $ tb $.

	\subsection{Blanchfield and linking forms} Following~\cite[Theorems 1.3 and 1.4]{friedl2017calculation},~\cite[Theorem 3.6]{ammmps2020branched} and a calculation in the accompanying notebooks, we obtain that the Blanchfield pairings on $ \mathcal{A}(K_i) $ are given, with respect to the ordered basis $ \{a, b\} $ and after reducing both the numerators and denominators modulo $ t^3 - 1 $, by
	\begin{align*}
		\frac{1}{7}&\begin{pmatrix}
			2t^2+2t-4 &  -2t^2+4t-2\\ 
			4t^2-2t-2 & -4t^2-4t+8
		\end{pmatrix}, \\[5pt]
		\frac{1}{7}&\begin{pmatrix}
			-3t^2-3t+6 &  3t^2-3t\\ 
			-3t^2+3t & 3t^2+3t-6
		\end{pmatrix}, \\[5pt]
		\frac{1}{7}&\begin{pmatrix}
			-4t^2-4t+8 & 4t^2-2t-2\\ 
			-2t^2+4t-2 & 2t^2+2t-4
		\end{pmatrix}
	\end{align*} \\
	for $ i = 1, 2, 3 $, respectively. Via~\cite[Chapter~2.6]{friedlthesis}, applied similarly to~\cite[Proposition~3.7]{ammmps2020branched}, we read off that the linking forms $ \lambda_i : H_1(\Sigma_3(K_i)) \times H_1(\Sigma_3(K_i)) \ra \Q/\Z $  with respect to the ordered basis $ \{a,ta,b,tb\} $ are given by
	\[
		\frac{1}{7}\begin{pmatrix}
		-4 & 2 & -2 & 4 \\
		2 & -4 & -2 & -2 \\
		-2 & -2 & 1 & -4 \\
		4 & -2 & -4 & 1
		\end{pmatrix}, \quad
		\frac{1}{7}\begin{pmatrix}
		6 & -3 & 0 & -3 \\
		-3 & 6 & 3 & 0 \\
		0 & 3 & -6 & 3 \\
		-3 & 0 & 3 & -6
		\end{pmatrix} \quad \textrm{and} \quad
		\frac{1}{7}\begin{pmatrix}
		1 & -4 & -2 & -2 \\
		-4 & 1 & 4 & -2 \\
		-2 & 4 & -4 & 2 \\
		-2 & -2 & 2 & -4
		\end{pmatrix}.
	\]

	\subsection{Metabolisers of $ H_1(\Sigma_3(K_i)) $} Write $ M = (\Z/7) [t^{\pm 1}] / \langle t^2 + t + 1 \rangle $ so that, as a $(\Z/7) [t^{\pm 1}]$-module, $ H_1(\Sigma_3(K_i)) \cong M \oplus M $. Since the order $ |H_1(\Sigma_3(K_i))| = 7^4 $, we seek to describe all its $ \Z[t^{\pm 1}] $-submodules of order $ 7^2 = 49 $. Since $ t^2 + t + 1 $ has irreducible factors $ (t - 2),\,(t + 3) \in (\Z/7)[t^{\pm 1}]$, the set $ \{ \langle 0 \rangle, \langle 1 \rangle, \langle t - 2 \rangle, \langle t + 3 \rangle \} $ contains precisely the $ (\Z/7)[t^{\pm 1}] $-submodules of $ M $; since the $ \Z[t^{\pm 1}] $-action on $ M $ factors through $ (\Z/7)[t^{\pm 1}] $, these are also precisely the $ \Z[t^{\pm 1}] $-submodules of $ M $. Observe that $ |\langle 0 \rangle| = 1 $, $ |\langle 1 \rangle| = 49 $ and $ |\langle t - 2 \rangle| = |\langle t + 3 \rangle| = 7 $. Now let $ N $ be a  $ \Z[t^{\pm 1}] $-submodule of $ H_1(\Sigma_3(K_i)) $,  and consider the commutative diagram
	\[
		\xymatrix{
			M \oplus \{ 0 \} \ar[r] & M \oplus M \ar[r]^>>>>>{\pi} & \{0\} \oplus M \\
			\ker \pi|_N \ar[r] \ar[u] & N \ar[r]^{\pi|_N} \ar[u] & \im\, \pi|_N \ar[u]
		}
	\]
	where $ \pi(x, y) = (0, y) $ for all $ x, y \in M $, and unlabelled arrows are inclusions; $ \ker \pi|_N $ and $ \im\,\pi|_N $ are submodules of $ M \oplus \{ 0 \} $ and $ \{ 0 \} \oplus M $, respectively. Since $ |N| = |\ker \pi|_N|\cdot|\im\,\pi|_N|$, we can deduce what $ N $ could be by order considerations.
		\begin{itemize}
			\item If $ |\ker \pi|_N| = 49 $, then $ |\im\, \pi|_N| = 1 $ and $ N = \ker \pi|_N = \textrm{span}_{(\Z/7)[t^{\pm 1}]}\{ (1, 0) \} $.
			\item If $ |\ker \pi|_N| = 1 $, then $ N \cong \im\, \pi|_N = \textrm{span}_{(\Z/7)[t^{\pm 1}]}\{(k, 1)\} $ for some $ k \in (\Z/7)[t^{\pm 1}] $.
		\end{itemize}
	Now, let $ \{ \langle t - 2 \rangle, \langle t + 3 \rangle \} = \{ \langle \alpha \rangle, \langle \beta \rangle \} $; we have $ \mathrm{Ann}\,\alpha = \langle \beta \rangle $ and $ \mathrm{Ann}\,\beta = \langle \alpha \rangle $. There are two remaining cases to consider.
		\begin{itemize}
			\item Suppose $ \ker \pi|_N \cong \im\, \pi|_N \cong \langle \alpha \rangle $. Then $ N $ contains $ \{ (\alpha, 0), (k, \alpha) \} $ for some $ k \in (\Z/7)[t^{\pm 1}] $. Since $ \beta (k, \alpha) = (\beta k, 0) \in \ker \pi|_N $, we must have $ \beta k \in \langle \alpha \rangle $, so $ k \in \langle \alpha \rangle $, i.e., $ k = l\alpha $ for some $ l \in (\Z/7)[t^{\pm 1}] $. Then $ -l(\alpha, 0) + (k, \alpha) = (0, \alpha) \in N $, so $ N $ contains two linearly independent elements $ (\alpha, 0)$ and $(0, \alpha) $ of order $ 7 $, hence is generated by them for any choice of $ k $. This yields two submodules $ N = \mathrm{span}_{(\Z/7)[t^{\pm 1}]}\{ (t - 2, 0), (0, t - 2) \} $ and $ N = \mathrm{span}_{(\Z/7)[t^{\pm 1}]}\{ (t + 3, 0), (0, t + 3) \} $. 
			\item Suppose $ \ker \pi|_N = \langle \alpha \rangle $ and $ \im\, \pi|_N \cong \langle \beta \rangle $. We similarly observe that $ N $ contains $ \{ (\alpha, 0), (k, \beta) \} $ for some $ k \in (\Z/7)[t^{\pm 1}] $. We have $ \alpha (k, \beta) = (\alpha k, 0) \in \ker \pi|_N $, so we can take $ k $ modulo $ \alpha $, i.e., $ k \in \Z/7 $. Then $ \{ (\alpha, 0), (k, \beta) \} $ is a linearly independent set generating $ N $ for any choice of $ k \in \Z/7 $. Thus, $ N = \mathrm{span}_{(\Z/7)[t^{\pm 1}]}\{ (t - 2, 0), (k, t + 3) \} $ or $ N = \mathrm{span}_{(\Z/7)[t^{\pm 1}]}\{ (t + 3, 0), (k, t - 2) $ for $ k \in \Z/7 $.
		\end{itemize}

	To summarise, writing elements of $ H_1(\Sigma_3(K_i)) \cong M \oplus M $ additively with the first copy of $ M $ generated by $ a $ and the second by $ b $, the desired submodules are
		\begin{align*}
			N_0 &= \mathrm{span}_{(\Z/7)[t^{\pm 1}]} \{ a \}; \\
			N_{k_0,k_1} &= \mathrm{span}_{(\Z/7)[t^{\pm 1}]} \{ ka + b \} \textrm{ for } k \in (\Z/7)[t^{\pm 1}] \\
			&\qquad= \mathrm{span}_{(\Z/7)[t^{\pm 1}]} \{ (k_0 + k_1 t)a + b \} \textrm{ for } k_0, k_1 \in \Z/7; \\
			N_0^\alpha &= \mathrm{span}_{(\Z/7)[t^{\pm 1}]} \{ (t - 2)a, (t - 2)b \}; \\
			N_0^\beta &= \mathrm{span}_{(\Z/7)[t^{\pm 1}]} \{ (t + 3)a, (t + 3)b \}; \\
			N_{k_0}^{\alpha\beta} &= \mathrm{span}_{(\Z/7)[t^{\pm 1}]} \{ (t - 2)a, k_0 a + (t + 3)b \} \textrm{ for } k_0 \in \Z/7; \\
			N_{k_0}^{\beta\alpha} &= \mathrm{span}_{(\Z/7)[t^{\pm 1}]} \{ (t + 3)a, k_0 a + (t - 2)b \} \textrm{ for } k_0 \in \Z/7.
		\end{align*}
	By a direct computation carried out in the accompanying notebooks, the submodules $ N_0^\alpha $ and $ N_0^\beta $ are metabolisers for $ K_i $ for all $ i $; in addition, $ K_1 $ has metabolisers $ N_6^{\alpha \beta} $ and $ N_4^{\beta \alpha} $, $ K_2 $ has metabolisers $ N_1^{\alpha \beta} $ and $ N_1^{\beta \alpha} $, and $ K_3 $ has metabolisers $ N_2^{\alpha \beta} $ and $ N_3^{\beta \alpha} $.

	\subsection{Characters vanishing on the metabolisers}
	\label{subsec:characters} It is easy to define characters $ \chi : H_1(\Sigma_3(K_i)) \ra \Z/7 $ that vanish on the metabolisers. Let subscripts and superscripts denote corresponding metabolisers and 4-tuples in parentheses represent the values a character takes on the ordered basis $ \{ a, ta, b, tb \} $. Then we can take $\chi_0^\alpha $ and $ \chi_0^\beta $ as defined by $ (1, 2, 1, 2) $ and $ (1, -3, 1, -3) $, respectively. The rest of the characters are presented in Table~\ref{tab:characters}.
	\begin{table}[h]
	\begin{tabular}{c c c}
	$K_1$ & $K_2$ & $K_3$ \\[2pt]
	\hline\\[\dimexpr-\normalbaselineskip+2pt]
	$\chi_6^{\alpha\beta} : (1, 2, 1, -2)$  & $\chi_1^{\alpha\beta} : (1, 2, 1, -4)$ & $\chi_2^{\alpha\beta} = \chi_0^\alpha: (1, 2, 1, 2)$ \\[2pt]
	$\chi_4^{\beta\alpha} : (1, -3, 1, -2)$ & $\chi_1^{\beta\alpha} : (1, -3, 1, 1)$ & $\chi_3^{\beta\alpha} : (1, -3, 1, 1)$
	\end{tabular}
		\caption{Our choice of characters $ \chi : H_1(\Sigma_3(K_i)) \ra \Z/7 $ vanishing on the metabolisers of $ K_1 $, $ K_2 $ and $ K_3 $; the characters $\chi_0^\alpha $ and $ \chi_0^\beta $ are given for all $ K_i $ by $ (1, 2, 1, 2) $ and $ (1, -3, 1, -3) $.}
		\label{tab:characters}
	\end{table}

	\subsection{Representations of the knot groups into $ \GL(3, \Q(\zeta_7)[t^{\pm 1}]) $}
	\label{subsec:wirtinger} Let $ K \in \{ K_1, K_2, K_3 \}$. We follow~\cite[Appendix~A]{ammmps2020branched} and~\cite[Chapters~5--7]{heraldkirklivingston} to construct representations $ \varphi_\chi : \pi_1(X_K) \ra \GL(3, \Q(\zeta_7)[t^{\pm 1}]) $ of the knot group of $ K $ that determine twisted Alexander polynomials for each character in Table~\ref{tab:characters}. Fix a basepoint $ x_0 $ in $ S^3 \setminus \nu(F) $ and let $ \tilde{x}_0 $ be its lift to the preferred copy of $ S^3 \setminus \nu(F) $ in $ \widetilde{X_K^3} $, the triple cyclic cover of the knot complement $ X_K $. Also fix a based meridian $ \mu_0 $ in $ S^3 \setminus K $ and let $ \varepsilon : \pi_1(X_K) \ra \Z $ be the abelianisation homomorphism. Define a map $ l: \ker \varepsilon \ra H_1(\Sigma_3(K)) $ that takes a simple closed curve $ \gamma \subset S^3 \setminus K $ based at $ x_0 $ with $ \mathrm{lk}(K, \gamma) = 0 $ to the homology class of the well-defined lift $ \tilde{\gamma} $ in $ \widetilde{X_K^3} \subset \Sigma_3(K) $ based at $ \tilde{x}_0 $. In particular, $ l $ has the property that for any $ \gamma \in \ker \varepsilon $, we have
	\[\tag{$\ddagger$}
		l(\mu_0 \gamma \mu_0^{-1}) = t \cdot l(\gamma).
	\]
	Now consider the semidirect product $ \Z \ltimes H_1(\Sigma_3(K)) $, with $ \Z = \langle t \rangle $, whose product structure is given by
	$
	(t^{m_1}, x_1) \cdot (t^{m_2}, x_2) = (t^{m_1 + m_2}, t^{-m_2} \cdot x_1 + x_2)
	$ with $ t $ acting on elements of $ H_1(\Sigma_3(K)) $ by deck transformations. Fix a Wirtinger presentation of $ \pi_1(X_K) \cong \langle g_1, \dots, g_n \mid r_1, \dots, r_n \rangle$ and define a homomorphism
	\begin{align*}
		\psi : \pi_1(X_K) &\ra \Z \ltimes H_1(\Sigma_3(K)) \\
		g_i &\mapsto (t, l(\mu_0^{-1} g_i)) =: (t, v_i)
	\end{align*}
	on the generators of $ \pi_1(X_K) $, since clearly $ \mu_0^{-1} g_i \in \ker \varepsilon $. Observe that a relation $ g_i g_j g_i^{-1} g_k^{-1} = 1 $ imposes, via the group structure on $ \Z \ltimes H_1(\Sigma_3(K)) $, the condition
	\[
	\tag{$\ddagger\ddagger$}
		(1 - t) v_i + tv_j - v_k = 0.
	\]
	Finally, for a character $ \chi : H_1(\Sigma_3(K)) \ra \Z/7 $, we obtain a representation $ \varphi_\chi : \pi_1(X_K) \ra \GL(3, \Q(\zeta_7)[t^{\pm 1}]) $ by setting $ \varphi_\chi = \tau_\chi \circ \psi $, where
	\begin{align*}
		\tau_\chi : \Z \ltimes H_1(\Sigma_3(K)) &\ra \GL(3, \Q(\zeta_7)[t^{\pm 1}]) \\
		(t^m, v) &\mapsto \begin{pmatrix}0&1&0\\0&0&1\\t&0&0\end{pmatrix}^m \begin{pmatrix}\zeta_7^{\chi(v)}&0&0\\0&\zeta_7^{\chi(t\cdot v)}&\\0&0&\zeta_7^{\chi(t^2\cdot v)}\end{pmatrix}.
	\end{align*}

	We shall apply the equation $ (\ddagger) $ to determine the form of the first few $ v_k $ for $ K $ in terms of the generators $ \{a, b\} $ of $ H_1(\Sigma_3(K)) $ and then deduce the rest of $ v_k $ using $ (\ddagger\ddagger) $, giving us the desired $ \varphi_\chi $. We illustrate the process in more detail for $ K_1 $, with $ K_2 $ and $ K_3 $ cases being analogous.

	\begin{figure}[h]
		\centering
		\def\svgwidth{0.9\textwidth}
		\makebox[\textwidth][c]{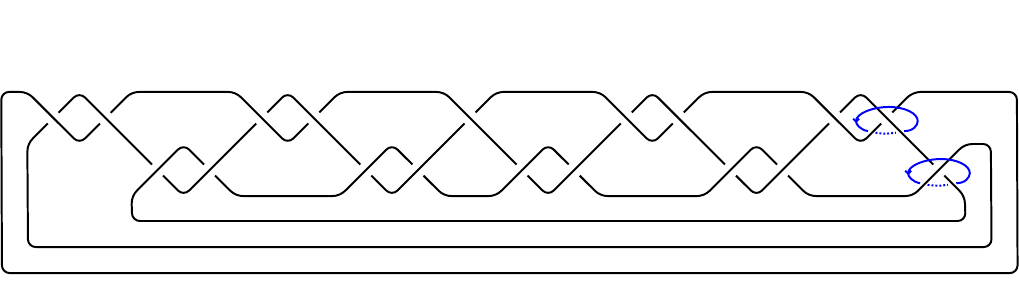}
		\caption{
			Choice of arc labels for $ K_1 $ giving the Wirtinger presentation of $ \pi_1(X_1) $.
		} \label{fig:K1_wirtinger}
	\end{figure}

	Recall that we orient $ K_1 $ clockwise. Index the arcs in the diagram of $ K_1 $ as shown in Figure~\ref{fig:K1_wirtinger}, starting with $ 1 $ at the top left and increasing the index at every undercrossing. This yields the following Wirtinger presentation of $ \pi_1(X_1) $, with generators being the meridians $ g_i $ about each arc $ i $ based at $ x_0 $:
	\begin{align*}
	\pi_1(X_1) = \left\langle
	g_1, \dots, g_{18}
	\left|
		\begin{array}{c c c c c}
			g_{1} g_{13} g_1^{-1} g_{12}^{-1}, & g_{3} g_{17} g_{3}^{-1} g_{16}^{-1}, & g_{7} g_{1} g_{7}^{-1} g_{18}^{-1}, & g_{16} g_{9} g_{16}^{-1} g_{10}^{-1}, & \\
			g_{13} g_{2} g_{13}^{-1} g_{1}^{-1}, & g_{17} g_{5} g_{17}^{-1} g_{4}^{-1}, & g_{8} g_{13} g_{8}^{-1} g_{14}^{-1}, & g_{10} g_{3} g_{10}^{-1} g_{4}^{-1}, & g_{6} g_{11} g_{6}^{-1} g_{12}^{-1}, \\
			g_{2} g_{15} g_{2}^{-1} g_{14}^{-1}, & g_{5} g_{18} g_{5}^{-1} g_{17}^{-1}, & g_{14} g_{8} g_{14}^{-1} g_{9}^{-1}, & g_{4} g_{10} g_{4}^{-1} g_{11}^{-1}, & g_{12} g_{7} g_{12}^{-1} g_{8}^{-1} \\
			g_{15} g_{3} g_{15}^{-1} g_{2}^{-1}, & g_{18} g_{7} g_{18}^{-1} g_{6}^{-1}, & g_{9} g_{15} g_{9}^{-1} g_{16}^{-1}, & g_{11} g_{5} g_{11}^{-1} g_{6}^{-1}, &
		\end{array}
	\right.
	\right\rangle
	\end{align*}
	Observe that $ \widehat{s}_{15} = g_8 g_{12}^{-1} $ and $ \widehat{s}_{16} = g_1^{-1} g_{7} $. Fix $ \mu_0 = g_1 $. Then $ v_1 = l(g_1^{-1} g_1) = 0 $ and $ v_7 = l(g_1^{-1} g_7) = b $. Also, using the property $ (\ddagger) $, we have
	\begin{align*}
		a = l(g_8 g_{12}^{-1}) &= l(g_8 g_1^{-1} g_1 g_{12}^{-1}) \\ &= l(g_8 g_1^{-1}) + l(g_1 g_{12}^{-1}) \\ &= l(g_1 g_1^{-1} g_8 g_1^{-1}) - l(g_{12} g_1^{-1}) \\ &= l(g_1 g_1^{-1} g_8 g_1^{-1}) - l(g_1 g_1^{-1} g_{12} g_1^{-1}) \\ &= tv_{8} - tv_{12}.
	\end{align*}
	Applying $ (\ddagger\ddagger) $ to the relation $ g_{12} g_{7} g_{12}^{-1} g_{8}^{-1} = 1 $ and recalling we are working modulo $ t^2 + t + 1 $, we get
	\[
		\begin{array}{c r l}
		&(1 - t) v_{12} + tv_7 - v_8 &= 0 \\
		\Longrightarrow &(1 - t) v_{12} - v_8 &= - tb \mid \cdot\, (-t) \\
		\Longrightarrow & (tv_{8} - tv_{12}) + t^2 v_{12} &= t^2 b \\
		\Longrightarrow & a + t^2 v_{12} &= t^2 b \mid \cdot\, t \\
		\Longrightarrow & v_{12} &= -ta + b.
	\end{array}
	\]
	Now we can use $ (\ddagger\ddagger) $ repeatedly to obtain the values of all $ v_i $. With the same conventions and the choice $ \mu_0 = g_1 $, for $ K_2 $ we have $ l(\widehat{s}_{14}) = l(g_1^{-1} g_6) = a $ and $ l(\widehat{s}_{16}) = l(g_{14} g_{7}^{-1}) = b$, while for $ K_3 $, $ l(\widehat{s}_{15}) = l(g_1^{-1} g_7) = a $ and $ l(\widehat{s}_{16}) = l(g_{8} g_{13}^{-1}) = b$; this lets us calculate the following values of $ v_i $ analogously:
	\begin{center}{}
	\begin{table}[h]
		\centering
		\begin{tabular}{c|ccc}
		& $\pi_1(X_1)$ & $\pi_1(X_2)$ & $\pi_1(X_3)$\\
		\cline{1-4}
		$v_1$ & $0$ & $0$ & $0$ \\
		$v_2$ & $(6t+5)a + (5t+6)b$ & $(5t+6)a + (4t+4)b$ & $(5t+6)a + (6t+5)b$ \\
		$v_3$ & $5ta + 5b$ & $3a + (3t+1)b$ & $(4t+3)a + (t+1)b$ \\
		$v_4$ & $(2t+5)a + 6b$ & $(2t+6)a + 2b$ & $(6t+3)a + b$ \\
		$v_5$ & $(6t+5)a + (5t+3)b$ & $(4t+1)a + (6t+5)b$ & $(6t+4)a + (4t+6)b$ \\
		$v_6$ & $5tb$ & $a$ & $(4t+1)a + (t+6)b$ \\
		$v_7$ & $b$ & $a + (6t+1)b$ & $a$ \\
		$v_8$ & $(5t+6)a + b$ & $(6t+6)a + (6t+5)b$ & $a + (5t+6)b$ \\
		$v_9$ & $(3t+2)a + (4t+1)b$ & $5ta + (3t+5)b$ & $(3t+6)a + (5t+3)b$ \\
		$v_{10}$ & $(t+2)a + (5t+1)b$ & $(2t+3)a + (3t+3)b$ & $(4t+6)a + (3t+3)b$ \\
		$v_{11}$ & $6a + (4t+1)b$ & $(3t+6)a + 5b$ & $(3t+6)a + 2tb$ \\
		$v_{12}$ & $6ta + b$ & $(6t+2)a + (6t+6)b$ & $(6t+2)a + 6b$ \\
		$v_{13}$ & $6a + (6t+6)b$ & $a+b$ & $a + 6tb$ \\
		$v_{14}$ & $(3t+4)a + (6t+2)b$ & $a + 5tb$& $(6t+6)a + 6b$ \\
		$v_{15}$ & $3a + (2t+4)b$ & $(5t+3)a + 6b$& $(6t+2)a + (3t+4)b$ \\
		$v_{16}$ & $5a + (2t+3)b$ & $(5t+5)a + (3t+5)b$ & $(t+1)a + (2t+6)b$ \\
		$v_{17}$ & $4a + (2t+2)b$ & $ta + (5t+3)b$ & $ta + (2t+5)b$ \\
		$v_{18}$ & $(6t+1)b$ &$(6t+1)a$ & $(6t+1)a$
		\end{tabular}
		\caption{Values of $ v_k = l(\mu_0^{-1} g_k) \in H_1(\Sigma_3(K_i)) $.}
		\label{tab:lifts}
	\end{table}
	\end{center}
	\vspace{-2em}
	Constructing representations $\varphi_\chi$ for the characters in Subsection~\ref{subsec:characters} is now mechanical.

	\subsection{Calculating twisted Alexander polynomials} Again, let $ K \in \{ K_1, K_2, K_3 \} $ and fix the Wirtinger presentation of $ \pi_1(X_K) $ as in Subsection~\ref{subsec:wirtinger}. Given a representation $ \varphi_\chi : \pi_1(X_K) \ra \GL(3, \Q(\zeta_7)[t^{\pm 1}])$, let $ \Phi : \Z[\pi_1(X_K)] \ra \mathrm{Mat}_3(\Q(\zeta_7)[t^{\pm 1}]) $ be its natural extension to the group ring $ \Z[\pi_1(X_K)] $ taking values in the set of $ 3 \times 3 $ matrices with $\Q(\zeta_7)[t^{\pm 1}]$ coefficients. Let 
	\[
	\Psi = \left( \frac{\pp r_i}{\pp g_j} \right)_{i,j=1,\dots,18}
	\]
	be the Fox matrix for the Wirtinger presentation of $ \pi_1(X_K) $; the row of $ \Psi $ corresponding to the relation $ g_i g_j g_i^{-1} g_k^{-1} $ has $ 1 - g_k $ in the $ i^{\textrm{th}} $ column, $ g_i $ in the $ j^{\textrm{th}} $ column, $ -1 $ in the $ k^{\textrm{th}} $ column and zeros elsewhere. Write $ r(\Psi) $ for the reduced Fox matrix obtained by dropping the first row and column from $ \Psi $ and let $ \Phi_\chi $ be the $ 51 \times 51 $ matrix obtained by applying $ \Phi $ to $ r(\Psi) $ entrywise. By~\cite[Section~9]{heraldkirklivingston}, the reduced twisted Alexander polynomial $ \widetilde{\Delta}_K^\chi(t) $ of $ (K, \chi) $ (for non-trivial $\chi$) is given by
		\[
			\widetilde{\Delta}_K^\chi(t) = \frac{\det{\Phi_\chi}}{(t - 1)\det(\varphi_\chi(g_1) - I)}.
		\]
	Thus we obtain the 11 reduced twisted Alexander polynomials listed in Appendix A associated with our characters of interest.
		
	\subsection{Obstructing sliceness of $ K_i $.} To show that $ K_1 $, $ K_2 $ and $ K_3 $ are not slice, we use the following generalisation of the Fox-Milnor condition, due to Kirk and Livingston~\cite{kirk1999twisted}.

	\begin{thm}[\protect{\cite[Proposition~6.1]{kirk1999twisted}}]
		Let $ K \subset S^3 $ be a slice knot and fix distinct primes $ p $ and $ q $. Then there exists a covering transformation invariant metaboliser $ N $ in $ H_1(\Sigma_p(K)) $ such that the following condition holds: for every character $ \chi : H_1(\Sigma_p(K)) \ra \Z/q $ that vanishes on $ N $, the associated reduced twisted Alexander polynomial $ \widetilde{\Delta}_K^\chi(t) \in \Q(\zeta_q)[t^{\pm 1}] $ is a \emph{norm}, i.e., $ \widetilde{\Delta}_K^\chi(t) $ can be written as
			\[
				\widetilde{\Delta}_K^\chi(t) = \lambda t^k f(t) \overline{f(t)}
			\]
		for some $ \lambda \in \Q(\zeta_q) $, $ k \in \Z $ and $ \overline{f(t)} $ obtained from $ f(t) \in \Q(\zeta_q)[t^{\pm 1}] $ by the involution $ t \mapsto t^{-1} $, $ \zeta_q \mapsto \zeta_q^{-1}$.
	\end{thm}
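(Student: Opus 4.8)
The statement is the twisted generalisation of the Fox--Milnor theorem, and I would prove it by the usual Casson--Gordon strategy, now run with the twisted coefficient system attached to $\chi$. Let $D \subset D^4$ be a slice disc for $K$, let $Y = D^4 \setminus \nu(D)$ be its exterior (so $\partial Y = X_K$, $H_1(Y) \cong \Z$ is generated by a meridian, and $\chi(Y) = 0$), and let $W$ be the $p$-fold cyclic cover of $D^4$ branched over $D$, so that $\partial W = \Sigma_p(K)$, which I abbreviate $\Sigma$. Then $H_*(W; \Q) \cong H_*(\mathrm{pt}; \Q)$ and the covering transformation of $\Sigma$ extends over $W$. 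Define $N = \ker\!\big(H_1(\Sigma) \to H_1(W)\big)$. The first step is to verify that $N$ has the required properties: a Poincar\'e--Lefschetz duality and universal coefficients computation for the pair $(W, \Sigma)$ --- the \emph{half lives, half dies} principle, valid because $H_*(W;\Q)$ is trivial --- shows that $|N|^2 = |H_1(\Sigma)|$, that $N$ equals its own orthogonal complement for the linking form $\lambda$, and hence that $\lambda$ vanishes on $N$; and since the deck transformation extends over $W$ it preserves $N$, so $N$ is covering-transformation invariant.

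Next I would fix a character $\chi\colon H_1(\Sigma) \to \Z/q$ vanishing on $N$ and extend it across the slice disc. Vanishing on $N = \ker(H_1(\Sigma) \to H_1(W))$ is exactly what lets $\chi$ factor through $H_1(W)$, which in turn is a quotient of $H_1$ of the $p$-fold cyclic (unbranched) cover of $Y$; combining the resulting character with the infinite cyclic structure of $Y$ exactly as in the construction of $\varphi_\chi$ earlier in this section produces a homomorphism $\pi_1(Y) \to \Z \ltimes H_1(\Sigma)$, and postcomposing with the representation $\tau_\chi$ gives $\varphi_{\bar\chi}\colon \pi_1(Y) \to \GL(p, \Q(\zeta_q)[t^{\pm1}])$ whose restriction to $\pi_1(X_K) = \pi_1(\partial Y)$ is the representation $\varphi_\chi$ defining $\widetilde\Delta_K^\chi(t)$. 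One then computes the twisted homology of $Y$ with coefficients in the module $\mathcal V = \Q(\zeta_q)[t^{\pm1}]^p$: since $\chi$ is non-trivial, $H_0(Y; \mathcal V \otimes \Q(\zeta_q)(t)) = 0$; since $\chi(Y) = 0$, the twisted Euler characteristic forces $\dim H_1 = \dim H_2$ over the field $\Q(\zeta_q)(t)$; and duality over that field then forces all of $H_*(Y; \mathcal V \otimes \Q(\zeta_q)(t))$ to vanish, so $H_*(Y; \mathcal V)$ is torsion and has a well-defined order.

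Finally I would extract the norm factorisation. The reduced twisted Alexander polynomial $\widetilde\Delta_K^\chi(t)$ is, up to a unit and a power of $t$, the order of the twisted Alexander module $H_1(X_K; \mathcal V)$ (the reduction factor $(t-1)\det(\varphi_\chi(g_1) - I)$ being precisely what removes the $H_0$ contribution). Feeding the long exact sequence of the pair $(Y, X_K)$ with coefficients in $\mathcal V$ into twisted Poincar\'e--Lefschetz duality $H_i(Y; \mathcal V) \cong H^{4-i}(Y, X_K; \mathcal V)$, and noting that dualising the coefficient system is exactly the involution $t \mapsto t^{-1}$, $\zeta_q \mapsto \zeta_q^{-1}$, one shows that the torsion module $H_1(X_K; \mathcal V)$ carries a nonsingular linking form having the kernel $P$ of $H_1(X_K; \mathcal V) \to H_1(Y; \mathcal V)$ as a metaboliser; hence its order equals $f(t)\,\overline{f(t)}$ up to a unit and a monomial, where $f(t) = \mathrm{ord}\,P$, which is precisely the statement that $\widetilde\Delta_K^\chi(t)$ is a norm. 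The main obstacle to carrying this out cleanly is this last step: one must track the self-conjugate pairing on the twisted homology of the boundary carefully, confirm that the modules in play are genuinely torsion so that orders are defined and behave multiplicatively along the exact sequence, and reconcile the algebraic orders appearing there with the determinant-based definition of $\widetilde\Delta_K^\chi(t)$ and its reduction convention.
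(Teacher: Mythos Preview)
The paper does not prove this theorem: it is quoted verbatim from Kirk--Livingston \cite[Proposition~6.1]{kirk1999twisted} and used as a black box to obstruct sliceness of $K_1$, $K_2$, $K_3$. There is therefore no ``paper's own proof'' to compare against.

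That said, your sketch is the standard Casson--Gordon/Kirk--Livingston argument and is essentially correct in outline. One point to tighten: when you write that vanishing on $N=\ker(H_1(\Sigma)\to H_1(W))$ ``is exactly what lets $\chi$ factor through $H_1(W)$'', this only gives a character on the \emph{image} of $H_1(\Sigma)$ in $H_1(W)$, and you still need to extend to all of $H_1(W)$ (equivalently, to the first homology of the $p$-fold cyclic cover of $Y$). In the Kirk--Livingston setting this extension exists because the relevant cokernel has no $q$-torsion (or, in later treatments, one allows characters into $\Z/q^a$ for varying $a$ to absorb this), but it is not automatic from what you wrote. Your closing paragraph already flags the genuinely delicate part --- matching the determinant/Wada definition of $\widetilde{\Delta}_K^\chi$ with module orders and checking the torsion hypotheses needed for multiplicativity along the long exact sequence --- and that is indeed where the work lies.
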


	Using the routine implemented in \texttt{SnapPy}~\cite{SnapPy} for determining whether an element of $ \Q(\zeta_q)[t^{\pm 1}] $ is a norm, which relies on the \texttt{SageMath} algorithm for factoring polynomials over cyclotomic fields, we conclude via a calculation in the accompanying notebooks that none of the 11 polynomials in Appendix~A are norms. This implies that $ K_1 $, $ K_2 $ and $ K_3 $ are not slice.

	\appendix
	\section{Reduced twisted Alexander polynomials for $ K_1 $, $ K_2 $ and $ K_3 $}
	The following table contains reduced twisted Alexander polynomials for knots $ K_1 $, $ K_2 $ and $ K_3 $ associated to characters vanishing on the metabolisers of respective knots; for brevity, we write $ \zeta = \zeta_7 $ and $ \theta = \zeta_7 + \zeta_7^2 + \zeta_7^4 $.
	\begin{longtable}{c | c}
		$(K_i, \chi)$ & $ \widetilde{\Delta}_{K_i}^\chi(t) $ \\
		& \\[\dimexpr-\normalbaselineskip+5pt]
		\hline\hline\\[\dimexpr-\normalbaselineskip+5pt]
		$(K_1, \chi_0^\alpha)$        &  \begin{minipage}{0.85\textwidth}\centering $-t^{15} + \left(-2 \theta - 1\right) t^{14} + \left(-8 \theta - 3\right) t^{13} + 15 t^{12} + \left(-3 \theta + 48\right) t^{11} + \left(-8 \theta + 33\right) t^{10} + \left(-48 \theta + 34\right) t^{9} + 199 t^{8} + \left(48 \theta + 82\right) t^{7} + \left(8 \theta + 41\right) t^{6} + \left(3 \theta + 51\right) t^{5} + 15 t^{4} + \left(8 \theta + 5\right) t^{3} + \left(2 \theta + 1\right) t^{2} - t$\end{minipage}\\
		& \\[\dimexpr-\normalbaselineskip+5pt]
		\hline\\[\dimexpr-\normalbaselineskip+5pt]
		$(K_1, \chi_0^\beta)$         &  \begin{minipage}{0.85\textwidth} \centering $-t^{15} + \left(-4 \theta + 5\right) t^{14} + \left(24 \theta - 15\right) t^{13} + \left(-93 \theta - 14\right) t^{12} + \left(98 \theta + 11\right) t^{11} + \left(-2 \theta + 71\right) t^{10} + \left(-11 \theta - 154\right) t^{9} + 360 t^{8} + \left(11 \theta - 143\right) t^{7} + \left(2 \theta + 73\right) t^{6} + \left(-98 \theta - 87\right) t^{5} + \left(93 \theta + 79\right) t^{4} + \left(-24 \theta - 39\right) t^{3} + \left(4 \theta + 9\right) t^{2} - t$ \end{minipage}\\
		& \\[\dimexpr-\normalbaselineskip+5pt]
		\hline\\[\dimexpr-\normalbaselineskip+5pt]
		$(K_1,\chi_6^{\alpha\beta})$ & \begin{minipage}{0.85\textwidth} \centering $-t^{15} + \left(2 \zeta^{5} - \zeta^{4} + 4 \zeta^{3} - \zeta^{2} - 2 \zeta + 5\right) t^{14} + \left(-3 \zeta^{5} + 7 \zeta^{4} - 24 \zeta^{3} - 3 \zeta^{2} + 2 \zeta - 20\right) t^{13} + \left(7 \zeta^{5} - 67 \zeta^{4} + 41 \zeta^{3} - 8 \zeta^{2} - 35 \zeta + 7\right) t^{12} + \left(-45 \zeta^{5} + 52 \zeta^{4} - 38 \zeta^{3} + 3 \zeta^{2} - \zeta + 19\right) t^{11} + \left(68 \zeta^{5} + 51 \zeta^{4} + 114 \zeta^{3} + 24 \zeta^{2} + 95 \zeta + 63\right) t^{10} + \left(116 \zeta^{5} + 121 \zeta^{4} + 80 \zeta^{3} + 56 \zeta^{2} + 124 \zeta + 65\right) t^{9} + \left(149 \zeta^{5} - 3 \zeta^{4} - 3 \zeta^{3} + 149 \zeta^{2} + 19\right) t^{8} + \left(-68 \zeta^{5} - 44 \zeta^{4} - 3 \zeta^{3} - 8 \zeta^{2} - 124 \zeta - 59\right) t^{7} + \left(-71 \zeta^{5} + 19 \zeta^{4} - 44 \zeta^{3} - 27 \zeta^{2} - 95 \zeta - 32\right) t^{6} + \left(4 \zeta^{5} - 37 \zeta^{4} + 53 \zeta^{3} - 44 \zeta^{2} + \zeta + 20\right) t^{5} + \left(27 \zeta^{5} + 76 \zeta^{4} - 32 \zeta^{3} + 42 \zeta^{2} + 35 \zeta + 42\right) t^{4} + \left(-5 \zeta^{5} - 26 \zeta^{4} + 5 \zeta^{3} - 5 \zeta^{2} - 2 \zeta - 22\right) t^{3} + \left(\zeta^{5} + 6 \zeta^{4} + \zeta^{3} + 4 \zeta^{2} + 2 \zeta + 7\right) t^{2} - t$\end{minipage} \\
		& \\[\dimexpr-\normalbaselineskip+5pt]
		\hline\\[\dimexpr-\normalbaselineskip+5pt]
		$(K_1,\chi_4^{\beta\alpha})$ &  \begin{minipage}{0.85\textwidth} \centering $-t^{15} + \left(2 \zeta^{5} + \zeta^{4} + 2 \zeta^{3} + \zeta^{2} - \zeta + 2\right) t^{14} + \left(-5 \zeta^{5} - 2 \zeta^{4} - 3 \zeta^{3} - 6 \zeta^{2} - 2 \zeta - 9\right) t^{13} + \left(10 \zeta^{5} + 4 \zeta^{4} + 9 \zeta^{2} + 20\right) t^{12} + \left(-35 \zeta^{5} - 36 \zeta^{4} - 30 \zeta^{3} - 35 \zeta^{2} - 4 \zeta - 10\right) t^{11} + \left(44 \zeta^{5} - 10 \zeta^{4} + 8 \zeta^{3} + 47 \zeta^{2} + 52 \zeta + 85\right) t^{10} + \left(-57 \zeta^{5} - 17 \zeta^{4} - 63 \zeta^{3} + 29 \zeta^{2} - 27 \zeta + 11\right) t^{9} + \left(7 \zeta^{5} + 38 \zeta^{4} + 38 \zeta^{3} + 7 \zeta^{2} - 59\right) t^{8} + \left(56 \zeta^{5} - 36 \zeta^{4} + 10 \zeta^{3} - 30 \zeta^{2} + 27 \zeta + 38\right) t^{7} + \left(-5 \zeta^{5} - 44 \zeta^{4} - 62 \zeta^{3} - 8 \zeta^{2} - 52 \zeta + 33\right) t^{6} + \left(-31 \zeta^{5} - 26 \zeta^{4} - 32 \zeta^{3} - 31 \zeta^{2} + 4 \zeta - 6\right) t^{5} + \left(9 \zeta^{5} + 4 \zeta^{3} + 10 \zeta^{2} + 20\right) t^{4} + \left(-4 \zeta^{5} - \zeta^{4} - 3 \zeta^{2} + 2 \zeta - 7\right) t^{3} + \left(2 \zeta^{5} + 3 \zeta^{4} + 2 \zeta^{3} + 3 \zeta^{2} + \zeta + 3\right) t^{2} - t$ \end{minipage}\\
		& \\[\dimexpr-\normalbaselineskip+5pt]
		\hline\hline\\[\dimexpr-\normalbaselineskip+5pt]
		$(K_2, \chi_0^\alpha)$        &  \begin{minipage}{0.85\textwidth} \centering $t^{15} + \left(-\theta - 2\right) t^{14} + \left(-2 \theta - 1\right) t^{13} + \left(3 \theta + 3\right) t^{12} + \left(-13 \theta - 22\right) t^{11} + \left(-15 \theta - 5\right) t^{10} + \left(25 \theta + 13\right) t^{9} - 82 t^{8} + \left(-25 \theta - 12\right) t^{7} + \left(15 \theta + 10\right) t^{6} + \left(13 \theta - 9\right) t^{5} -3 \theta t^{4} + \left(2 \theta + 1\right) t^{3} + \left(\theta - 1\right) t^{2} + t$\end{minipage}\\
		& \\[\dimexpr-\normalbaselineskip+5pt]
		\hline\\[\dimexpr-\normalbaselineskip+5pt]
		$(K_2,\chi_0^\beta)$         &  \begin{minipage}{0.85\textwidth} \centering $t^{15} + \left(-4 \theta - 7\right) t^{14} + \left(16 \theta + 15\right) t^{13} + \left(-41 \theta - 26\right) t^{12} + \left(55 \theta + 5\right) t^{11} + \left(-20 \theta - 18\right) t^{10} + \left(-25 \theta + 114\right) t^{9} - 292 t^{8} + \left(25 \theta + 139\right) t^{7} + \left(20 \theta + 2\right) t^{6} + \left(-55 \theta - 50\right) t^{5} + \left(41 \theta + 15\right) t^{4} + \left(-16 \theta - 1\right) t^{3} + \left(4 \theta - 3\right) t^{2} + t$\end{minipage}\\
		& \\[\dimexpr-\normalbaselineskip+5pt]
		\hline\\[\dimexpr-\normalbaselineskip+5pt]
		$(K_2,\chi_1^{\alpha\beta})$ &  \begin{minipage}{0.85\textwidth} \centering $t^{15} + \left(-3 \zeta^{5} + 3 \zeta^{4} - 2 \zeta^{3} + \zeta^{2} - 4\right) t^{14} + \left(4 \zeta^{5} - 12 \zeta^{4} + 6 \zeta^{3} - 13 \zeta^{2} + \zeta\right) t^{13} + \left(23 \zeta^{4} + 9 \zeta^{3} + 30 \zeta^{2} - 4 \zeta + 17\right) t^{12} + \left(-49 \zeta^{5} - 17 \zeta^{4} - 50 \zeta^{3} - 46 \zeta^{2} - 33 \zeta - 13\right) t^{11} + \left(-48 \zeta^{5} + 5 \zeta^{4} + 67 \zeta^{3} - 34 \zeta^{2} + 87 \zeta - 36\right) t^{10} + \left(164 \zeta^{5} + 69 \zeta^{4} + 127 \zeta^{3} + 39 \zeta^{2} + 83 \zeta + 75\right) t^{9} + \left(173 \zeta^{5} + 32 \zeta^{4} + 32 \zeta^{3} + 173 \zeta^{2} + 166\right) t^{8} + \left(-44 \zeta^{5} + 44 \zeta^{4} - 14 \zeta^{3} + 81 \zeta^{2} - 83 \zeta - 8\right) t^{7} + \left(-121 \zeta^{5} - 20 \zeta^{4} - 82 \zeta^{3} - 135 \zeta^{2} - 87 \zeta - 123\right) t^{6} + \left(-13 \zeta^{5} - 17 \zeta^{4} + 16 \zeta^{3} - 16 \zeta^{2} + 33 \zeta + 20\right) t^{5} + \left(34 \zeta^{5} + 13 \zeta^{4} + 27 \zeta^{3} + 4 \zeta^{2} + 4 \zeta + 21\right) t^{4} + \left(-14 \zeta^{5} + 5 \zeta^{4} - 13 \zeta^{3} + 3 \zeta^{2} - \zeta - 1\right) t^{3} + \left(\zeta^{5} - 2 \zeta^{4} + 3 \zeta^{3} - 3 \zeta^{2} - 4\right) t^{2} + t$\end{minipage}\\
		& \\[\dimexpr-\normalbaselineskip+5pt]
		\hline\\[\dimexpr-\normalbaselineskip+5pt]
		$(K_2,\chi_1^{\beta\alpha})$ &  \begin{minipage}{0.85\textwidth} \centering $t^{15} + \left(-\zeta^{5} - 2 \zeta^{4} + \zeta^{2} - 3 \zeta - 7\right) t^{14} + \left(4 \zeta^{5} + 8 \zeta^{4} - 4 \zeta^{3} - 4 \zeta^{2} + 17 \zeta + 28\right) t^{13} + \left(-\zeta^{5} - 20 \zeta^{4} + 21 \zeta^{3} + 30 \zeta^{2} - 52 \zeta - 78\right) t^{12} + \left(-10 \zeta^{5} + 38 \zeta^{4} - 51 \zeta^{3} - 88 \zeta^{2} + 122 \zeta + 187\right) t^{11} + \left(81 \zeta^{5} - 15 \zeta^{4} + 87 \zeta^{3} + 205 \zeta^{2} - 155 \zeta - 358\right) t^{10} + \left(-256 \zeta^{5} - 31 \zeta^{4} - 157 \zeta^{3} - 312 \zeta^{2} + 91 \zeta + 487\right) t^{9} + \left(434 \zeta^{5} + 146 \zeta^{4} + 146 \zeta^{3} + 434 \zeta^{2} - 430\right) t^{8} + \left(-403 \zeta^{5} - 248 \zeta^{4} - 122 \zeta^{3} - 347 \zeta^{2} - 91 \zeta + 396\right) t^{7} + \left(360 \zeta^{5} + 242 \zeta^{4} + 140 \zeta^{3} + 236 \zeta^{2} + 155 \zeta - 203\right) t^{6} + \left(-210 \zeta^{5} - 173 \zeta^{4} - 84 \zeta^{3} - 132 \zeta^{2} - 122 \zeta + 65\right) t^{5} + \left(82 \zeta^{5} + 73 \zeta^{4} + 32 \zeta^{3} + 51 \zeta^{2} + 52 \zeta - 26\right) t^{4} + \left(-21 \zeta^{5} - 21 \zeta^{4} - 9 \zeta^{3} - 13 \zeta^{2} - 17 \zeta + 11\right) t^{3} + \left(4 \zeta^{5} + 3 \zeta^{4} + \zeta^{3} + 2 \zeta^{2} + 3 \zeta - 4\right) t^{2} + t$\end{minipage}\\
		& \\[\dimexpr-\normalbaselineskip+5pt]
		\hline\hline\\[\dimexpr-\normalbaselineskip+5pt]
		$ \begin{array}{c}
		(K_3,\chi_0^\alpha) =  \\ (K_3,\chi_2^{\alpha\beta})
		\end{array} $       
		&  \begin{minipage}{0.85\textwidth} \centering $t^{15} + \left(\theta - 3\right) t^{14} + \left(-3 \theta - 1\right) t^{13} + \left(-2 \theta - 22\right) t^{12} + \left(-73 \theta - 8\right) t^{11} + \left(10 \theta + 239\right) t^{10} + \left(362 \theta + 223\right) t^{9} - 675 t^{8} + \left(-362 \theta - 139\right) t^{7} + \left(-10 \theta + 229\right) t^{6} + \left(73 \theta + 65\right) t^{5} + \left(2 \theta - 20\right) t^{4} + \left(3 \theta + 2\right) t^{3} + \left(-\theta - 4\right) t^{2} + t$\end{minipage}\\
		& \\[\dimexpr-\normalbaselineskip+5pt]
		\hline\\[\dimexpr-\normalbaselineskip+5pt]
		$(K_3,\chi_0^\beta)$         &  \begin{minipage}{0.85\textwidth} \centering $t^{15} - 7 t^{14} + \left(-2 \theta + 17\right) t^{13} + \left(6 \theta - 32\right) t^{12} + \left(-26 \theta + 26\right) t^{11} + \left(24 \theta + 8\right) t^{10} + \left(40 \theta + 83\right) t^{9} - 178 t^{8} + \left(-40 \theta + 43\right) t^{7} + \left(-24 \theta - 16\right) t^{6} + \left(26 \theta + 52\right) t^{5} + \left(-6 \theta - 38\right) t^{4} + \left(2 \theta + 19\right) t^{3} - 7 t^{2} + t$\end{minipage}\\
		& \\[\dimexpr-\normalbaselineskip+5pt]
		\hline\\[\dimexpr-\normalbaselineskip+5pt]
		$(K_3,\chi_3^{\beta\alpha})$ & \begin{minipage}{0.85\textwidth} \centering $t^{15} + \left(-\zeta^{5} + 3 \zeta^{4} + 2 \zeta^{3} + 2 \zeta^{2} + 4 \zeta - 3\right) t^{14} + \left(18 \zeta^{5} + \zeta^{4} + 3 \zeta^{3} + 3 \zeta^{2} - 4 \zeta + 11\right) t^{13} + \left(-33 \zeta^{5} - 17 \zeta^{4} - 26 \zeta^{3} - 21 \zeta^{2} - 11 \zeta - 60\right) t^{12} + \left(-5 \zeta^{5} - 52 \zeta^{4} - 16 \zeta^{3} - 3 \zeta^{2} - 56 \zeta + 45\right) t^{11} + \left(-14 \zeta^{5} + 48 \zeta^{4} + 66 \zeta^{3} - 18 \zeta^{2} + 59 \zeta - 5\right) t^{10} + \left(106 \zeta^{5} + 89 \zeta^{4} - 10 \zeta^{3} + 109 \zeta^{2} + 18 \zeta + 101\right) t^{9} + \left(-133 \zeta^{5} - 123 \zeta^{4} - 123 \zeta^{3} - 133 \zeta^{2} - 212\right) t^{8} + \left(91 \zeta^{5} - 28 \zeta^{4} + 71 \zeta^{3} + 88 \zeta^{2} - 18 \zeta + 83\right) t^{7} + \left(-77 \zeta^{5} + 7 \zeta^{4} - 11 \zeta^{3} - 73 \zeta^{2} - 59 \zeta - 64\right) t^{6} + \left(53 \zeta^{5} + 40 \zeta^{4} + 4 \zeta^{3} + 51 \zeta^{2} + 56 \zeta + 101\right) t^{5} + \left(-10 \zeta^{5} - 15 \zeta^{4} - 6 \zeta^{3} - 22 \zeta^{2} + 11 \zeta - 49\right) t^{4} + \left(7 \zeta^{5} + 7 \zeta^{4} + 5 \zeta^{3} + 22 \zeta^{2} + 4 \zeta + 15\right) t^{3} + \left(-2 \zeta^{5} - 2 \zeta^{4} - \zeta^{3} - 5 \zeta^{2} - 4 \zeta - 7\right) t^{2} + t$\end{minipage}
		\end{longtable}

	\printbibliography


\end{document}